\font\cyrillic=wncyi10
\newcommand{\comperm}{\hbox{\cyrillic X}\,}
\newcommand{\prop}[1]{{\mathcal{#1}}}
\newcommand{\cate}[1]{{\mathsf{#1}}}
\newcommand{\field}{{\mathbb K}}
\newcommand{\bS}{{\mathbb S}}
\newcommand{\bX}{{\mathbb X}}
\DeclareMathOperator{\Id}{\mathbb{I}}
\DeclareMathOperator{\b1}{\mathbbm{1}}
\DeclareMathOperator{\Orb}{Orb}
\DeclareMathOperator{\End}{End}
\DeclareMathOperator{\Sh}{Sh}
\newcommand{\comp}{\mathscr{C}}
\DeclareMathOperator{\Sol}{Sol}
\newcommand{\PT}{\mathcal{T}}
\DeclareMathOperator{\Des}{Des}	
\DeclareMathOperator{\Int}{\mathscr{I}}
\DeclareMathOperator{\Partitions}{\mathcal{Part}}
\DeclareMathOperator{\Prim}{Prim}
\DeclareMathOperator{\sol}{\prop{P}\text{-}\mathrm{Sol}}
\DeclareMathOperator{\magSol}{\prop{Mag}\text{-}\mathrm{Sol}}
\newcommand{\conj}[1]{[#1]}
\newcommand{\Vect}{\cate{Vect}}
\newcommand{\Smod}{\cate{\bS}\text{-}\cate{mod}}
\newcommand{\grVect}{\cate{gr}\text{-}\cate{Vect}}
\newcommand{\Und}{\cate{Und}}
\DeclareMathOperator{\propP}{\prop{P}}
\DeclareMathOperator{\propH}{\prop{H}}
\DeclareMathOperator{\Mag}{\prop{Mag}}
\DeclareMathOperator{\propLoop}{\prop{Loop}}
\DeclareMathOperator{\TMag}{\prop{3Mag}}
\DeclareMathOperator{\propCom}{\prop{C}\prop{o}\prop{m}}
\DeclareMathOperator{\propAssoc}{\prop{Assoc}}
\DeclareMathOperator{\propLie}{\prop{Lie}}
\DeclareMathOperator{\propSab}{\prop{Sab}}
\DeclareMathOperator{\propMag}{\prop{Mag}}
\DeclareMathOperator{\propC}{\prop{C}}
\DeclareMathOperator{\utimes}{\underline{\times}} 
\DeclareMathOperator{\uotimes}{\underline{\otimes}}
\DeclareMathOperator{\longi}{\mathit{l}} 
\DeclareMathOperator{\st}{st} 
\DeclareMathOperator{\ideal}{ideal}
\DeclareMathOperator{\Nil}{Nil}
\DeclareMathOperator{\spann}{\mathrm{span}} 
\DeclareMathOperator{\Stb}{Stb}
\DeclareMathOperator{\typ}{\mathrm{type}} 	
\DeclareMathOperator{\ctyp}{\mathrm{ctype}}
\newtheorem{theorem}{Theorem}[section]
\newtheorem{lemma}[theorem]{Lemma}
\newtheorem{proposition}[theorem]{Proposition}
\newtheorem{corollary}[theorem]{Corollary}
\theoremstyle{definition}
\newtheorem{definition}[theorem]{Definition}
\newtheorem{example}[theorem]{Example}
\theoremstyle{remark}
\newtheorem{remark}[theorem]{Remark}
\numberwithin{equation}{section}
\newcommand{\gbeg}[2]{
	\unitlength=1pt
	\grrow = #2
	\grcolumn = 0
	\grcalca = #1
	\grcalcb = #2
	\multiply \grcalca by \factor
	\grwidth = \grcalca
	\multiply \grcalcb by \factor
	\begin{minipage}{\grcalca pt}
		\begin{picture}(\grcalca,\grcalcb)
			\advance \grcalcb by -\factor
		}
		\newcommand{\gend}{
		\end{picture}
		{\vskip2.5ex}
\end{minipage} }
\newcommand{\gnl}{
	\advance \grrow by -1
	\grcolumn = 0}
\newcommand{\gvac}[1]{
	\advance \grcolumn by #1}
\newcommand{\gcl}[1]{
	\grcalca = \grcolumn
	\multiply \grcalca by \factor
	\advance \grcalca by \hfactor
	\grcalcb = \grrow
	\multiply \grcalcb by \factor
	\grcalcc = #1
	\multiply \grcalcc by \factor
	\put(\grcalca,\grcalcb) {\line(0,-1){\grcalcc}}
	\advance \grcolumn by 1}
\newcommand{\gcn}[4]{
	\grcalca = \grcolumn
	\multiply \grcalca by \factor
	\grcalci = #3
	\multiply \grcalci by \hfactor
	\advance \grcalca by \grcalci
	\grcalcb = \grcolumn
	\multiply \grcalcb by \factor
	\grcalci = #3
	\advance \grcalci by #4
	\multiply \grcalci by \qfactor
	\advance \grcalcb by \grcalci
	\grcalcc = \grcolumn
	\multiply \grcalcc by \factor
	\grcalci = #4
	\multiply \grcalci by \hfactor
	\advance \grcalcc by \grcalci
	\grcalcd = \grrow
	\multiply \grcalcd by \factor
	\grcalce = \grrow
	\multiply \grcalce by \factor
	\grcalci = #2
	\multiply \grcalci by \tfactor
	\advance \grcalce by -\grcalci
	\grcalcf = \grrow
	\multiply \grcalcf by \factor
	\grcalci = #2
	\multiply \grcalci by \hfactor
	\advance \grcalcf by -\grcalci
	\grcalcg = \grrow
	\multiply \grcalcg by \factor
	\grcalci = #2
	\multiply \grcalci by \tfactor
	\multiply \grcalci by 2
	\advance \grcalcg by -\grcalci
	\grcalch = \grrow
	\advance \grcalch by -#2
	\multiply \grcalch by \factor
	\qbezier(\grcalca,\grcalcd)(\grcalca,\grcalce)(\grcalcb,\grcalcf)
	\qbezier(\grcalcb,\grcalcf)(\grcalcc,\grcalcg)(\grcalcc,\grcalch)
	\advance \grcolumn by #1}
\newcommand{\gnot}[1]{
	\grcalca = \grcolumn
	\multiply \grcalca by \factor
	\advance \grcalca by \hfactor
	\grcalcb = \grrow
	\multiply \grcalcb by \factor
	\advance \grcalcb by -\hfactor
	\put(\grcalca,\grcalcb) {\makebox(0,0){$\scriptstyle #1$}} }
\newcommand{\got}[2]{
	\grcalca = \grcolumn
	\multiply \grcalca by \factor
	\grcalcc = #1
	\multiply \grcalcc by \hfactor
	\advance \grcalca by \grcalcc
	\grcalcb = \grrow
	\multiply \grcalcb by \factor
	\advance \grcalcb by -\tfactor
	\advance \grcalcb by -\tfactor
	\put(\grcalca,\grcalcb){\makebox(0,0)[b]{$#2$}}
	\advance \grcolumn by #1}
\newcommand{\gob}[2]{
	\grcalca = \grcolumn
	\multiply \grcalca by \factor
	\grcalcc = #1
	\multiply \grcalcc by \hfactor
	\advance \grcalca by \grcalcc
	\put(\grcalca,0){\makebox(0,0)[b]{$#2$}}
	\advance \grcolumn by #1}
\newcommand{\gmu}{
	\grcalca = \grcolumn
	\advance \grcalca by 1
	\multiply \grcalca by \factor
	\grcalcb = \grrow
	\multiply \grcalcb by \factor
	\grcalcc = \factor
	\advance \grcalcc by \hfactor
	\put(\grcalca,\grcalcb){\oval(\factor,\grcalcc)[b]}
	\advance \grcalcb by -\hfactor
	\advance \grcalcb by -\qfactor
	\put(\grcalca,\grcalcb) {\line(0,-1){\qfactor}}
	\advance \grcolumn by 2}
\newcommand{\gcmu}{
	\grcalca = \grcolumn
	\advance \grcalca by 1
	\multiply \grcalca by \factor
	\grcalcb = \grrow
	\advance \grcalcb by -1
	\multiply \grcalcb by \factor
	\grcalcc = \factor
	\advance \grcalcc by \hfactor
	\put(\grcalca,\grcalcb){\oval(\factor,\grcalcc)[t]}
	\advance \grcalcb by \factor
	\put(\grcalca,\grcalcb) {\line(0,-1){\qfactor}}
	\advance \grcolumn by 2}
\newcommand{\glm}{
	\grcalca = \grcolumn
	\multiply \grcalca by \factor
	\advance \grcalca by \hfactor
	\grcalcb = \grcalca
	\advance \grcalcb by \factor
	\grcalcc = \grrow
	\multiply \grcalcc by \factor
	\grcalcd = \grcalcc
	\advance \grcalcd by -\tfactor
	\grcalce = \grcalcd
	\advance \grcalce by -\tfactor
	\put(\grcalca, \grcalcc){\line(0,-1){\tfactor}}
	\put(\grcalca, \grcalcd){\line(1,0){\factor}}
	\put(\grcalca, \grcalcd){\line(3,-1){\factor}}
	\put(\grcalcb, \grcalcc){\line(0,-1){\factor}}
	\advance \grcolumn by 2}
\newcommand{\grm}{
	\grcalcb = \grcolumn
	\multiply \grcalcb by \factor
	\advance \grcalcb by \hfactor
	\grcalca = \grcalcb
	\advance \grcalca by \factor
	\grcalcc = \grrow
	\multiply \grcalcc by \factor
	\grcalcd = \grcalcc
	\advance \grcalcd by -\tfactor
	\grcalce = \grcalcd
	\advance \grcalce by -\tfactor
	\put(\grcalca, \grcalcc){\line(0,-1){\tfactor}}
	\put(\grcalca, \grcalcd){\line(-1,0){\factor}}
	\put(\grcalca, \grcalcd){\line(-3,-1){\factor}}
	\put(\grcalcb, \grcalcc){\line(0,-1){\factor}}
	\advance \grcolumn by 2}
\newcommand{\glcm}{
	\grcalca = \grcolumn
	\multiply \grcalca by \factor
	\advance \grcalca by \hfactor
	\grcalcb = \grcalca
	\advance \grcalcb by \factor
	\grcalcc = \grrow
	\advance \grcalcc by -1
	\multiply \grcalcc by \factor
	\grcalcd = \grcalcc
	\advance \grcalcd by \tfactor
	\grcalce = \grcalcd
	\advance \grcalce by \tfactor
	\put(\grcalca, \grcalcc){\line(0,1){\tfactor}}
	\put(\grcalca, \grcalcd){\line(1,0){\factor}}
	\put(\grcalca, \grcalcd){\line(3,1){\factor}}
	\put(\grcalcb, \grcalcc){\line(0,1){\factor}}
	\advance \grcolumn by 2}
\newcommand{\grcm}{
	\grcalcb = \grcolumn
	\multiply \grcalcb by \factor
	\advance \grcalcb by \hfactor
	\grcalca = \grcalcb
	\advance \grcalca by \factor
	\grcalcc = \grrow
	\advance \grcalcc by -1
	\multiply \grcalcc by \factor
	\grcalcd = \grcalcc
	\advance \grcalcd by \tfactor
	\grcalce = \grcalcd
	\advance \grcalce by \tfactor
	\put(\grcalca, \grcalcc){\line(0,1){\tfactor}}
	\put(\grcalca, \grcalcd){\line(-1,0){\factor}}
	\put(\grcalca, \grcalcd){\line(-3,1){\factor}}
	\put(\grcalcb, \grcalcc){\line(0,1){\factor}}
	\advance \grcolumn by 2}
\newcommand{\gwmu}[1]{
	\grcalca = \grcolumn
	\multiply \grcalca by \factor
	\grcalcd = \hfactor
	\multiply \grcalcd by #1
	\advance \grcalca by \grcalcd
	\grcalcb = \grrow
	\multiply \grcalcb by \factor
	\grcalcc = \factor
	\advance \grcalcc by \hfactor
	\grcalcd = #1
	\advance \grcalcd by -1
	\multiply \grcalcd by \factor
	\put(\grcalca,\grcalcb){\oval(\grcalcd,\grcalcc)[b]}
	\advance \grcalcb by -\hfactor
	\advance \grcalcb by -\qfactor
	\put(\grcalca,\grcalcb) {\line(0,-1){\qfactor}}
	\advance \grcolumn by #1}
\newcommand{\gwcm}[1]{
	\grcalca = \grcolumn
	\multiply \grcalca by \factor
	\grcalcd = \hfactor
	\multiply \grcalcd by #1
	\advance \grcalca by \grcalcd
	\grcalcb = \grrow
	\advance \grcalcb by -1
	\multiply \grcalcb by \factor
	\grcalcc = \factor
	\advance \grcalcc by \hfactor
	\grcalcd = #1
	\advance \grcalcd by -1
	\multiply \grcalcd by \factor
	\put(\grcalca,\grcalcb){\oval(\grcalcd,\grcalcc)[t]}
	\advance \grcalcb by \factor
	\put(\grcalca,\grcalcb) {\line(0,-1){\qfactor}}
	\advance \grcolumn by #1}
\newcommand{\gwmuc}[1]{
	\grcalca = \grcolumn
	\multiply \grcalca by \factor
	\advance \grcalca by \hfactor
	\grcalcb = \grrow
	\multiply \grcalcb by \factor
	\grcalcc = #1
	\advance \grcalcc by -1
	\multiply \grcalcc by \factor
	\put(\grcalca,\grcalcb){\line(1,0){\grcalcc}}
	\advance \grcalca by -\hfactor
	\grcalcd = \hfactor
	\multiply \grcalcd by #1
	\advance \grcalca by \grcalcd
	\grcalcc = \factor
	\advance \grcalcc by \hfactor
	\grcalcd = #1
	\advance \grcalcd by -1
	\multiply \grcalcd by \factor
	\put(\grcalca,\grcalcb){\oval(\grcalcd,\grcalcc)[b]}
	\advance \grcalcb by -\hfactor
	\advance \grcalcb by -\qfactor
	\put(\grcalca,\grcalcb) {\line(0,-1){\qfactor}}
	\advance \grcolumn by #1}
\newcommand{\gwcmc}[1]{
	\grcalca = \grcolumn
	\multiply \grcalca by \factor
	\advance \grcalca by \hfactor
	\grcalcb = \grrow
	\multiply \grcalcb by \factor
	\advance \grcalcb by -\factor
	\grcalcc = #1
	\advance \grcalcc by -1
	\multiply \grcalcc by \factor
	\put(\grcalca,\grcalcb){\line(1,0){\grcalcc}}
	\grcalcd = #1
	\advance \grcalcd by -1
	\multiply \grcalcd by \hfactor
	\advance \grcalca by \grcalcd
	\grcalcc = \factor
	\advance \grcalcc by \hfactor
	\grcalcd = #1
	\advance \grcalcd by -1
	\multiply \grcalcd by \factor
	\put(\grcalca,\grcalcb){\oval(\grcalcd,\grcalcc)[t]}
	\advance \grcalcb by \factor
	\put(\grcalca,\grcalcb) {\line(0,-1){\qfactor}}
	\advance \grcolumn by #1}
\newcommand{\gev}{
	\grcalca = \grcolumn
	\advance \grcalca by 1
	\multiply \grcalca by \factor
	\grcalcb = \grrow
	\multiply \grcalcb by \factor
	\grcalcc = \factor
	\advance \grcalcc by \hfactor
	\put(\grcalca,\grcalcb){\oval(\factor,\grcalcc)[b]}
	\advance \grcolumn by 2}
\newcommand{\gdb}{
	\grcalca = \grcolumn
	\advance \grcalca by 1
	\multiply \grcalca by \factor
	\grcalcb = \grrow
	\advance \grcalcb by -1
	\multiply \grcalcb by \factor
	\grcalcc = \factor
	\advance \grcalcc by \hfactor
	\put(\grcalca,\grcalcb){\oval(\factor,\grcalcc)[t]}
	\advance \grcolumn by 2}
\newcommand{\gwev}[1]{
	\grcalca = \grcolumn
	\multiply \grcalca by \factor
	\grcalcd = \hfactor
	\multiply \grcalcd by #1
	\advance \grcalca by \grcalcd
	\grcalcb = \grrow
	\multiply \grcalcb by \factor
	\grcalcc = \factor
	\advance \grcalcc by \hfactor
	\grcalcd = #1
	\advance \grcalcd by -1
	\multiply \grcalcd by \factor
	\put(\grcalca,\grcalcb){\oval(\grcalcd,\grcalcc)[b]}
	\advance \grcolumn by #1}
\newcommand{\gwdb}[1]{
	\grcalca = \grcolumn
	\multiply \grcalca by \factor
	\grcalcd = \hfactor
	\multiply \grcalcd by #1
	\advance \grcalca by \grcalcd
	\grcalcb = \grrow
	\advance \grcalcb by -1
	\multiply \grcalcb by \factor
	\grcalcc = \factor
	\advance \grcalcc by \hfactor
	\grcalcd = #1
	\advance \grcalcd by -1
	\multiply \grcalcd by \factor
	\put(\grcalca,\grcalcb){\oval(\grcalcd,\grcalcc)[t]}
	\advance \grcolumn by #1}
\newcommand{\gbr}{
	\grcalca = \grcolumn
	\multiply \grcalca by \factor
	\advance \grcalca by \hfactor
	\grcalcb = \grcalca
	\advance \grcalcb by \hfactor
	\grcalcc = \grcalca
	\advance \grcalcc by \factor
	\grcalcd = \grrow
	\multiply \grcalcd by \factor
	\grcalce = \grcalcd
	\advance \grcalce by -\tfactor
	\grcalcf = \grcalcd
	\advance \grcalcf by -\hfactor
	\grcalcg = \grcalce
	\advance \grcalcg by -\tfactor
	\grcalch = \grcalcd
	\advance \grcalch by -\factor
	\qbezier(\grcalca,\grcalcd)(\grcalca,\grcalce)(\grcalcb,\grcalcf)
	\qbezier(\grcalcb,\grcalcf)(\grcalcc,\grcalcg)(\grcalcc,\grcalch)
	\advance \grcalcf by -\dfactor
	\advance \grcalcb by -\sfactor
	\qbezier(\grcalca,\grcalch)(\grcalca,\grcalcg)(\grcalcb,\grcalcf)
	\advance \grcalcf by \sfactor
	\advance \grcalcb by \tfactor
	\qbezier(\grcalcc,\grcalcd)(\grcalcc,\grcalce)(\grcalcb,\grcalcf)
	\advance \grcolumn by 2}
\newcommand{\gibr}{
	\grcalca = \grcolumn
	\multiply \grcalca by \factor
	\advance \grcalca by \hfactor
	\grcalcb = \grcalca
	\advance \grcalcb by \hfactor
	\grcalcc = \grcalca
	\advance \grcalcc by \factor
	\grcalcd = \grrow
	\multiply \grcalcd by \factor
	\grcalce = \grcalcd
	\advance \grcalce by -\tfactor
	\grcalcf = \grcalcd
	\advance \grcalcf by -\hfactor
	\grcalcg = \grcalce
	\advance \grcalcg by -\tfactor
	\grcalch = \grcalcd
	\advance \grcalch by -\factor
	\qbezier(\grcalcc,\grcalcd)(\grcalcc,\grcalce)(\grcalcb,\grcalcf)
	\qbezier(\grcalcb,\grcalcf)(\grcalca,\grcalcg)(\grcalca,\grcalch)
	\advance \grcalcf by -\dfactor
	\advance \grcalcb by \sfactor
	\qbezier(\grcalcc,\grcalch)(\grcalcc,\grcalcg)(\grcalcb,\grcalcf)
	\advance \grcalcf by \sfactor
	\advance \grcalcb by -\tfactor
	\qbezier(\grcalca,\grcalcd)(\grcalca,\grcalce)(\grcalcb,\grcalcf)
	\advance \grcolumn by 2}
\newcommand{\gbrc}{
	\grcalca = \grcolumn
	\multiply \grcalca by \factor
	\advance \grcalca by \hfactor
	\grcalcb = \grcalca
	\advance \grcalcb by \hfactor
	\grcalcc = \grcalca
	\advance \grcalcc by \factor
	\grcalcd = \grrow
	\multiply \grcalcd by \factor
	\grcalce = \grcalcd
	\advance \grcalce by -\tfactor
	\grcalcf = \grcalcd
	\advance \grcalcf by -\hfactor
	\grcalcg = \grcalce
	\advance \grcalcg by -\tfactor
	\grcalch = \grcalcd
	\advance \grcalch by -\factor
	\put(\grcalcb,\grcalcf){\circle{\hfactor}}
	\qbezier(\grcalca,\grcalcd)(\grcalca,\grcalce)(\grcalcb,\grcalcf)
	\qbezier(\grcalcb,\grcalcf)(\grcalcc,\grcalcg)(\grcalcc,\grcalch)
	\advance \grcalcf by -\dfactor
	\advance \grcalcb by -\sfactor
	\qbezier(\grcalca,\grcalch)(\grcalca,\grcalcg)(\grcalcb,\grcalcf)
	\advance \grcalcf by \sfactor
	\advance \grcalcb by \tfactor
	\qbezier(\grcalcc,\grcalcd)(\grcalcc,\grcalce)(\grcalcb,\grcalcf)
	\advance \grcolumn by 2}
\newcommand{\gibrc}{
	\grcalca = \grcolumn
	\multiply \grcalca by \factor
	\advance \grcalca by \hfactor
	\grcalcb = \grcalca
	\advance \grcalcb by \hfactor
	\grcalcc = \grcalca
	\advance \grcalcc by \factor
	\grcalcd = \grrow
	\multiply \grcalcd by \factor
	\grcalce = \grcalcd
	\advance \grcalce by -\tfactor
	\grcalcf = \grcalcd
	\advance \grcalcf by -\hfactor
	\grcalcg = \grcalce
	\advance \grcalcg by -\tfactor
	\grcalch = \grcalcd
	\advance \grcalch by -\factor
	\put(\grcalcb,\grcalcf){\circle{\hfactor}}
	\qbezier(\grcalcc,\grcalcd)(\grcalcc,\grcalce)(\grcalcb,\grcalcf)
	\qbezier(\grcalcb,\grcalcf)(\grcalca,\grcalcg)(\grcalca,\grcalch)
	\advance \grcalcf by -\dfactor
	\advance \grcalcb by \sfactor
	\qbezier(\grcalcc,\grcalch)(\grcalcc,\grcalcg)(\grcalcb,\grcalcf)
	\advance \grcalcf by \sfactor
	\advance \grcalcb by -\tfactor
	\qbezier(\grcalca,\grcalcd)(\grcalca,\grcalce)(\grcalcb,\grcalcf)
	\advance \grcolumn by 2}
\newcommand{\gu}[1]{
	\grcalca = \grcolumn
	\multiply \grcalca by \factor
	\grcalcd = \hfactor
	\multiply \grcalcd by #1
	\advance \grcalca by \grcalcd
	\grcalcb = \grrow
	\advance \grcalcb by -1
	\multiply \grcalcb by \factor
	\put(\grcalca,\grcalcb) {\line(0,1){\hfactor}}
	\advance \grcalcb by \hfactor
	\put(\grcalca,\grcalcb) {\circle*{3}}
	\advance \grcolumn by #1}
\newcommand{\gcu}[1]{
	\grcalca = \grcolumn
	\multiply \grcalca by \factor
	\grcalcd = \hfactor
	\multiply \grcalcd by #1
	\advance \grcalca by \grcalcd
	\grcalcb = \grrow
	\multiply \grcalcb by \factor
	\put(\grcalca,\grcalcb) {\line(0,-1){\hfactor}}
	\advance \grcalcb by -\hfactor
	\put(\grcalca,\grcalcb) {\circle*{3}}
	\advance \grcolumn by #1}
\newcommand{\gmp}[1]{
	\grcalca = \grcolumn
	\multiply \grcalca by \factor
	\advance \grcalca by \hfactor
	\grcalcb = \grrow
	\multiply \grcalcb by \factor
	\put(\grcalca,\grcalcb) {\line(0,-1){\dfactor}}
	\advance \grcalcb by -\factor
	\put(\grcalca,\grcalcb) {\line(0,1){\dfactor}}
	\advance \grcalcb by \hfactor
	\grcalcc = \factor
	\advance \grcalcc by -\qfactor
	\put(\grcalca,\grcalcb) {\circle{\grcalcc}}
	\put(\grcalca,\grcalcb) {\makebox(0,0){$\scriptstyle #1$}}
	\advance \grcolumn by 1}
\newcommand{\gbmp}[1]{
	\grcalca = \grcolumn
	\multiply \grcalca by \factor
	\advance \grcalca by \hfactor
	\grcalcb = \grrow
	\multiply \grcalcb by \factor
	\put(\grcalca,\grcalcb) {\line(0,-1){\dfactor}}
	\advance \grcalcb by -\factor
	\put(\grcalca,\grcalcb) {\line(0,1){\dfactor}}
	\advance \grcalca by -\hfactor
	\advance \grcalca by \dfactor
	\advance \grcalcb by \dfactor
	\grcalcc = \factor
	\advance \grcalcc by -\sfactor
	\put(\grcalca,\grcalcb) {\framebox(\grcalcc,\grcalcc){$\scriptstyle #1$}}
	\advance \grcolumn by 1}
\newcommand{\gbmpt}[1]{
	\grcalca = \grcolumn
	\multiply \grcalca by \factor
	\advance \grcalca by \hfactor
	\grcalcb = \grrow
	\multiply \grcalcb by \factor
	\put(\grcalca,\grcalcb) {\line(0,-1){\dfactor}}
	\advance \grcalcb by -\factor
	\advance \grcalca by -\hfactor
	\advance \grcalca by \dfactor
	\advance \grcalcb by \dfactor
	\grcalcc = \factor
	\advance \grcalcc by -\sfactor
	\put(\grcalca,\grcalcb) {\framebox(\grcalcc,\grcalcc){$\scriptstyle #1$}}
	\advance \grcolumn by 1}
\newcommand{\gbmpb}[1]{
	\grcalca = \grcolumn
	\multiply \grcalca by \factor
	\advance \grcalca by \hfactor
	\grcalcb = \grrow
	\multiply \grcalcb by \factor
	\advance \grcalcb by -\factor
	\put(\grcalca,\grcalcb) {\line(0,1){\dfactor}}
	\advance \grcalca by -\hfactor
	\advance \grcalca by \dfactor
	\advance \grcalcb by \dfactor
	\grcalcc = \factor
	\advance \grcalcc by -\sfactor
	\put(\grcalca,\grcalcb) {\framebox(\grcalcc,\grcalcc){$\scriptstyle #1$}}
	\advance \grcolumn by 1}
\newcommand{\gbmpn}[1]{
	\grcalca = \grcolumn
	\multiply \grcalca by \factor
	\advance \grcalca by \hfactor
	\grcalcb = \grrow
	\multiply \grcalcb by \factor
	\advance \grcalcb by -\factor
	\advance \grcalca by -\hfactor
	\advance \grcalca by \dfactor
	\advance \grcalcb by \dfactor
	\grcalcc = \factor
	\advance \grcalcc by -\sfactor
	\put(\grcalca,\grcalcb) {\framebox(\grcalcc,\grcalcc){$\scriptstyle #1$}}
	\advance \grcolumn by 1}
\newcommand{\glmptb}{
	\grcalca = \grcolumn
	\multiply \grcalca by \factor
	\advance \grcalca by \hfactor
	\grcalcb = \grrow
	\multiply \grcalcb by \factor
	\put(\grcalca,\grcalcb) {\line(0,-1){\dfactor}}
	\advance \grcalcb by -\factor
	\put(\grcalca,\grcalcb) {\line(0,1){\dfactor}}
	\advance \grcalca by -\hfactor
	\advance \grcalca by \dfactor
	\advance \grcalcb by \dfactor
	\put(\grcalca,\grcalcb) {\line(1,0){\factor}}
	\advance \grcalcb by \factor
	\advance \grcalcb by -\sfactor
	\put(\grcalca,\grcalcb) {\line(1,0){\factor}}
	\grcalcc = \factor
	\advance \grcalcc by -\sfactor
	\put(\grcalca,\grcalcb) {\line(0,-1){\grcalcc}}
	\advance \grcolumn by 1}
\newcommand{\glmpt}{
	\grcalca = \grcolumn
	\multiply \grcalca by \factor
	\advance \grcalca by \hfactor
	\grcalcb = \grrow
	\multiply \grcalcb by \factor
	\put(\grcalca,\grcalcb) {\line(0,-1){\dfactor}}
	\advance \grcalca by -\hfactor
	\advance \grcalca by \dfactor
	\advance \grcalcb by -\dfactor
	\put(\grcalca,\grcalcb) {\line(1,0){\factor}}
	\advance \grcalcb by -\factor
	\advance \grcalcb by \sfactor
	\put(\grcalca,\grcalcb) {\line(1,0){\factor}}
	\grcalcc = \factor
	\advance \grcalcc by -\sfactor
	\put(\grcalca,\grcalcb) {\line(0,1){\grcalcc}}
	\advance \grcolumn by 1}
\newcommand{\glmpb}{
	\grcalca = \grcolumn
	\multiply \grcalca by \factor
	\advance \grcalca by \hfactor
	\grcalcb = \grrow
	\multiply \grcalcb by \factor
	\advance \grcalcb by -\factor
	\put(\grcalca,\grcalcb) {\line(0,1){\dfactor}}
	\advance \grcalca by -\hfactor
	\advance \grcalca by \dfactor
	\advance \grcalcb by \dfactor
	\put(\grcalca,\grcalcb) {\line(1,0){\factor}}
	\advance \grcalcb by \factor
	\advance \grcalcb by -\sfactor
	\put(\grcalca,\grcalcb) {\line(1,0){\factor}}
	\grcalcc = \factor
	\advance \grcalcc by -\sfactor
	\put(\grcalca,\grcalcb) {\line(0,-1){\grcalcc}}
	\advance \grcolumn by 1}
\newcommand{\glmp}{
	\grcalca = \grcolumn
	\multiply \grcalca by \factor
	\advance \grcalca by \dfactor
	\grcalcb = \grrow
	\multiply \grcalcb by \factor
	\advance \grcalcb by -\dfactor
	\put(\grcalca,\grcalcb) {\line(1,0){\factor}}
	\advance \grcalcb by -\factor
	\advance \grcalcb by \sfactor
	\put(\grcalca,\grcalcb) {\line(1,0){\factor}}
	\grcalcc = \factor
	\advance \grcalcc by -\sfactor
	\put(\grcalca,\grcalcb) {\line(0,1){\grcalcc}}
	\advance \grcolumn by 1}
\newcommand{\gcmptb}{
	\grcalca = \grcolumn
	\multiply \grcalca by \factor
	\advance \grcalca by \hfactor
	\grcalcb = \grrow
	\multiply \grcalcb by \factor
	\put(\grcalca,\grcalcb) {\line(0,-1){\dfactor}}
	\advance \grcalcb by -\factor
	\put(\grcalca,\grcalcb) {\line(0,1){\dfactor}}
	\advance \grcalca by -\hfactor
	\advance \grcalcb by \dfactor
	\put(\grcalca,\grcalcb) {\line(1,0){\factor}}
	\advance \grcalcb by \factor
	\advance \grcalcb by -\sfactor
	\put(\grcalca,\grcalcb) {\line(1,0){\factor}}
	\advance \grcolumn by 1}
\newcommand{\gcmpt}{
	\grcalca = \grcolumn
	\multiply \grcalca by \factor
	\advance \grcalca by \hfactor
	\grcalcb = \grrow
	\multiply \grcalcb by \factor
	\put(\grcalca,\grcalcb) {\line(0,-1){\dfactor}}
	\advance \grcalcb by -\factor
	\advance \grcalca by -\hfactor
	\advance \grcalcb by \dfactor
	\put(\grcalca,\grcalcb) {\line(1,0){\factor}}
	\advance \grcalcb by \factor
	\advance \grcalcb by -\sfactor
	\put(\grcalca,\grcalcb) {\line(1,0){\factor}}
	\advance \grcolumn by 1}
\newcommand{\gcmpb}{
	\grcalca = \grcolumn
	\multiply \grcalca by \factor
	\advance \grcalca by \hfactor
	\grcalcb = \grrow
	\multiply \grcalcb by \factor
	\advance \grcalcb by -\factor
	\put(\grcalca,\grcalcb) {\line(0,1){\dfactor}}
	\advance \grcalca by -\hfactor
	\advance \grcalcb by \dfactor
	\put(\grcalca,\grcalcb) {\line(1,0){\factor}}
	\advance \grcalcb by \factor
	\advance \grcalcb by -\sfactor
	\put(\grcalca,\grcalcb) {\line(1,0){\factor}}
	\advance \grcolumn by 1}
\newcommand{\gcmp}{
	\grcalca = \grcolumn
	\multiply \grcalca by \factor
	\grcalcb = \grrow
	\multiply \grcalcb by \factor
	\advance \grcalcb by -\factor
	\advance \grcalcb by \dfactor
	\put(\grcalca,\grcalcb) {\line(1,0){\factor}}
	\advance \grcalcb by \factor
	\advance \grcalcb by -\sfactor
	\put(\grcalca,\grcalcb) {\line(1,0){\factor}}
	\advance \grcolumn by 1}
\newcommand{\grmptb}{
	\grcalca = \grcolumn
	\multiply \grcalca by \factor
	\advance \grcalca by \hfactor
	\grcalcb = \grrow
	\multiply \grcalcb by \factor
	\put(\grcalca,\grcalcb) {\line(0,-1){\dfactor}}
	\advance \grcalcb by -\factor
	\put(\grcalca,\grcalcb) {\line(0,1){\dfactor}}
	\advance \grcalca by \hfactor
	\advance \grcalca by -\dfactor
	\advance \grcalcb by \dfactor
	\put(\grcalca,\grcalcb) {\line(-1,0){\factor}}
	\advance \grcalcb by \factor
	\advance \grcalcb by -\sfactor
	\put(\grcalca,\grcalcb) {\line(-1,0){\factor}}
	\grcalcc = \factor
	\advance \grcalcc by -\sfactor
	\put(\grcalca,\grcalcb) {\line(0,-1){\grcalcc}}
	\advance \grcolumn by 1}
\newcommand{\grmpt}{
	\grcalca = \grcolumn
	\multiply \grcalca by \factor
	\advance \grcalca by \hfactor
	\grcalcb = \grrow
	\multiply \grcalcb by \factor
	\put(\grcalca,\grcalcb) {\line(0,-1){\dfactor}}
	\advance \grcalca by \hfactor
	\advance \grcalca by -\dfactor
	\advance \grcalcb by -\dfactor
	\put(\grcalca,\grcalcb) {\line(-1,0){\factor}}
	\advance \grcalcb by -\factor
	\advance \grcalcb by \sfactor
	\put(\grcalca,\grcalcb) {\line(-1,0){\factor}}
	\grcalcc = \factor
	\advance \grcalcc by -\sfactor
	\put(\grcalca,\grcalcb) {\line(0,1){\grcalcc}}
	\advance \grcolumn by 1}
\newcommand{\grmpb}{
	\grcalca = \grcolumn
	\multiply \grcalca by \factor
	\advance \grcalca by \hfactor
	\grcalcb = \grrow
	\multiply \grcalcb by \factor
	\advance \grcalcb by -\factor
	\put(\grcalca,\grcalcb) {\line(0,1){\dfactor}}
	\advance \grcalca by \hfactor
	\advance \grcalca by -\dfactor
	\advance \grcalcb by \dfactor
	\put(\grcalca,\grcalcb) {\line(-1,0){\factor}}
	\advance \grcalcb by \factor
	\advance \grcalcb by -\sfactor
	\put(\grcalca,\grcalcb) {\line(-1,0){\factor}}
	\grcalcc = \factor
	\advance \grcalcc by -\sfactor
	\put(\grcalca,\grcalcb) {\line(0,-1){\grcalcc}}
	\advance \grcolumn by 1}
\newcommand{\grmp}{
	\grcalca = \grcolumn
	\multiply \grcalca by \factor
	\advance \grcalca by \factor
	\advance \grcalca by -\dfactor
	\grcalcb = \grrow
	\multiply \grcalcb by \factor
	\advance \grcalcb by -\dfactor
	\put(\grcalca,\grcalcb) {\line(-1,0){\factor}}
	\advance \grcalcb by -\factor
	\advance \grcalcb by \sfactor
	\put(\grcalca,\grcalcb) {\line(-1,0){\factor}}
	\grcalcc = \factor
	\advance \grcalcc by -\sfactor
	\put(\grcalca,\grcalcb) {\line(0,1){\grcalcc}}
	\advance \grcolumn by 1}
\newcommand{\gsy}{
	\grcalca = \grcolumn
	\multiply \grcalca by \factor
	\advance \grcalca by \hfactor
	\grcalcb = \grcalca
	\advance \grcalcb by \hfactor
	\grcalcc = \grcalca
	\advance \grcalcc by \factor
	\grcalcd = \grrow
	\multiply \grcalcd by \factor
	\grcalce = \grcalcd
	\advance \grcalce by -\tfactor
	\grcalcf = \grcalcd
	\advance \grcalcf by -\hfactor
	\grcalcg = \grcalce
	\advance \grcalcg by -\tfactor
	\grcalch = \grcalcd
	\advance \grcalch by -\factor
	\qbezier(\grcalcc,\grcalcd)(\grcalcc,\grcalce)(\grcalcb,\grcalcf)
	\qbezier(\grcalcb,\grcalcf)(\grcalca,\grcalcg)(\grcalca,\grcalch)
	\advance \grcalcf by -\dfactor
	\advance \grcalcb by \sfactor
	\qbezier(\grcalcc,\grcalch)(\grcalcc,\grcalcg)(\grcalcb,\grcalcf)
	\qbezier(\grcalca,\grcalcd)(\grcalca,\grcalce)(\grcalcb,\grcalcf)
	\advance \grcolumn by 2}
\newcommand{\gwmuh}[3]{
	\grcalca = \grcolumn
	\multiply \grcalca by \factor
	\grcalcb = #2
	\advance \grcalcb by #3
	\multiply \grcalcb by \qfactor
	\advance \grcalca by \grcalcb
	\grcalcb = \grrow
	\multiply \grcalcb by \factor
	\grcalcc = #3
	\advance \grcalcc by -#2
	\multiply \grcalcc by \hfactor
	\grcalcd = \factor
	\advance \grcalcd by \hfactor
	\put(\grcalca,\grcalcb){\oval(\grcalcc,\grcalcd)[b]}
	\grcalca = \grcolumn
	\multiply \grcalca by \factor
	\grcalcc = #1
	\multiply \grcalcc by \hfactor
	\advance \grcalca by \grcalcc
	\advance \grcalcb by -\hfactor
	\advance \grcalcb by -\qfactor
	\put(\grcalca,\grcalcb) {\line(0,-1){\qfactor}}
	\advance \grcolumn by #1}
\newcommand{\gwcmh}[3]{
	\grcalca = \grcolumn
	\multiply \grcalca by \factor
	\grcalcb = #2
	\advance \grcalcb by #3
	\multiply \grcalcb by \qfactor
	\advance \grcalca by \grcalcb
	\grcalcb = \grrow
	\advance \grcalcb by -1
	\multiply \grcalcb by \factor
	\grcalcc = #3
	\advance \grcalcc by -#2
	\multiply \grcalcc by \hfactor
	\grcalcd = \factor
	\advance \grcalcd by \hfactor
	\put(\grcalca,\grcalcb){\oval(\grcalcc,\grcalcd)[t]}
	\grcalca = \grcolumn
	\multiply \grcalca by \factor
	\grcalcc = #1
	\multiply \grcalcc by \hfactor
	\advance \grcalca by \grcalcc
	\advance \grcalcb by \factor
	\put(\grcalca,\grcalcb) {\line(0,-1){\qfactor}}
	\advance \grcolumn by #1}
\newcommand{\gsbox}[1]{
	\grcalca = \grcolumn
	\multiply \grcalca by \factor
	\grcalcb = \grrow
	\multiply \grcalcb by \factor
	\advance \grcalcb by -\factor
	\grcalcc = #1
	\multiply \grcalcc by \factor
	\grcalcd = \factor
	\put(\grcalca,\grcalcb){\framebox(\grcalcc,\grcalcd){}}}
\newcommand{\gbox}[2]{
	\grcalca = \grcolumn
	\multiply \grcalca by \factor
	\grcalcb = \grrow
	\multiply \grcalcb by \factor
	\advance \grcalcb by -\factor
	\grcalcc = #1
	\multiply \grcalcc by \factor
	\grcalcd = #2
	\multiply \grcalcd by \factor
	\put(\grcalca,\grcalcb){\framebox(\grcalcc,\grcalcd){}}}
\newcommand{\linea}{\gcl{1}}
\newcommand{\cruce}{\gbr}
\newcommand{\gmuc}{
	\grcalca = \grcolumn
	\advance \grcalca by 1
	\multiply \grcalca by \factor
	\grcalcb = \grrow
	\multiply \grcalcb by \factor
	\grcalcc = \factor
	\advance \grcalcc by \hfactor
	\put(\grcalca,\grcalcb){\oval(\factor,\grcalcc)[b]}
	\advance \grcalcb by -\hfactor
	\advance \grcalcb by -\qfactor
	\put(\grcalca,\grcalcb){\circle{\hfactor}}
	\put(\grcalca,\grcalcb) {\line(0,-1){\qfactor}}
	\advance \grcolumn by 2}
\begin{document}
	%
	%
	%
	\title{Non-associative Solomon's descent algebras}
	\author{J. M. P\'erez-Izquierdo}%
	\address{Dpto. Matem\'aticas y Computaci\'on \\ Edificio CCT - C/ Madre de Dios, 53 \\ Universidad de La Rioja \\ 26006, Logro\~no, Spain}
	\email{jm.perez@unirioja.es}
	\thanks{}
	\dedicatory{In memory of Vicente R. Varea, a teacher who touched our hearts and inspired our minds.
	}%
	\subjclass[2010]{17A99, 05E15}%
	\keywords{Sabinin algebras, nonassociative Hopf algebras, Solomon algebra, Malvenuto-Reutenauer algebra, quasi-symmetric functions, nonassociative symmetric functions}%
	%
	\begin{abstract}
		Motivated by the observation that universal enveloping algebras of relatively free Sabinin algebras admit natural associative internal products, we revisit the classical relationship between Solomon's descent algebra, noncommutative symmetric functions, and Patras' descent algebras of graded bialgebras from a non-associative perspective. Using the language of algebras over cocommutative connected Hopf operads as a unifying framework, we describe a setting in which these connections extend beyond the associative case. This viewpoint allows us to interpret the free non-associative algebra as a non-associative analogue of Solomon’s descent algebra inside a suitably defined non-associative Malvenuto--Reutenauer-type Hopf algebra. We further provide a detailed classification of this object as an algebra, as a coalgebra, and as an associative algebra with respect to its internal product.
	\end{abstract}
	\maketitle

%
\section{Introduction}

Solomon’s descent algebra was originally introduced in the context of finite Coxeter groups as a subalgebra encoding information on descent sets with respect to a fixed system of simple reflections  \cite{MR0444756}. In that general setting, Solomon proved that the linear span of elements with prescribed descent data is closed under multiplication, a result that may be viewed as a Mackey-type formula for Coxeter groups. The case of the symmetric group $\bS_n$, corresponding to the Coxeter system of type~$A$, subsequently became prominent due to its particularly rich combinatorial structure and its close connections with compositions, permutation statistics, and representation theory.

In type~$A$, Solomon’s descent algebra admits several reformulations that have played a central role in algebraic combinatorics. Notably, it appears naturally as a Hopf subalgebra of the Malvenuto--Reutenauer Hopf algebra of permutations \cite{MR1358493}, and is closely related, via graded duality, to the Hopf algebras of noncommutative and quasisymmetric functions \cite{MR1327096}. These developments led to further generalizations, including the descent algebras of graded bialgebras introduced by Patras \cite{MR1302855}, and their reinterpretation within the broader theory of combinatorial Hopf algebras \cite{MR2196760}. A common feature of these constructions is the presence of an additional associative operation, often referred to as an internal product, which interacts in a nontrivial way with the bialgebra structure.

The aim of this work is to examine how aspects of the descent algebra formalism extend beyond the associative case. Our motivation is the observation that universal enveloping algebras of Sabinin algebras, although generally non-associative as algebras, admit natural associative internal products. This phenomenon, which reflects the interaction between algebraic operations and the coalgebra structure, suggests that part of the role played by descent algebras and the combinatorics of symmetric functions might persist beyond the associative and commutative framework, when the usual symmetries are replaced by more general ones, such as those associated with Moufang or Bol-type structures. From this point of view, classical techniques based on convolution algebras and symmetric functions, such as the construction of Lie idempotents, might also have analogues in non-associative situations, including those arising from Sabinin-type structures.

More concretely, we show that the descent algebra formalism and the associated convolution techniques can be realized in certain non-associative contexts, and that in the free case this realization admits an explicit combinatorial model.

We place our discussion within the existing framework of algebras over cocommutative connected Hopf operads. This language has proved effective in organizing a wide range of constructions in combinatorial Hopf algebra theory, and here it serves mainly as a convenient formal setting. 

The paper is organized as follows. Section~2 introduces notation and basic combinatorial objects. In Section~3, we construct the relatively free $\propP$-algebra associated with a coalgebra and establish the Hopf-algebraic framework. Section~4 explains the relations with Hopf algebras of symmetric functions and descent algebras, leading to non-associative analogues of Solomon-type constructions (Corollaries~\ref{cor:main} and~\ref{cor:internal_product_on_P-Sol}). Section~5 specializes the general theory to universal enveloping algebras of Sabinin algebras (Corollary~\ref{cor:main_Sabinin}), which originally motivated this work. The abstract framework developed above admits a particularly concrete realization in the case of the free non-associative algebra, which we analyze in detail in Section~6. We construct a non-associative Malvenuto--Reutenauer-type Hopf algebra and describe its algebraic (Theorem~\ref{thm:classification_*}), coalgebraic (Corollary~\ref{cor:classification_Delta}), and internal product structures (Proposition~\ref{prop:quotient_compermutations_to_compositions} and Theorem~\ref{thm:structure_inner}). An appendix collects explicit low-degree examples of primitive elements.

\section{Notation relative to operads and pseudo-compositions}

This section establishes the notational conventions used throughout the paper.  The underlying categories required in this work are $\Vect$ (the category of $\field$-vector spaces and linear maps over a base field $\field$ of characteristic zero), $\grVect$ (the category of $\mathbb{N}$-graded $\field$-vector spaces and $\field$-linear maps that preserve the grading), and $\Smod$. Objects in $\Smod$, or $\bS$-modules, are $\field$-vector spaces $M = \bigoplus_{n \geq 0} M(n)$, where each $M(n)$ is a right $\bS_n$-module for the symmetric group $\bS_n$ of degree $n$. Morphisms in $\Smod$ are $\bS_n$-equivariant linear maps that preserve the grading. In what follows, $\Und$ denotes either $\grVect$ or $\Smod$ interchangeably. 

We assume the reader is familiar with the following monoidal structures. For objects $M, N$ in $\grVect$, the tensor product is given by:
\begin{equation*}
	(M \otimes N)(n) := \bigoplus_{i=0}^n M(i) \otimes N(n-i),
\end{equation*}
while for objects $P, Q$ in $\Smod$, it is defined as:
\begin{equation*}
	(P \otimes Q)(n) := \bigoplus_{i=0}^n P(i) \otimes Q(n-i) \otimes_{\bS_i \times \bS_{n-i}} \field \bS_n.
\end{equation*}
We also assume familiarity with algebras over symmetric operads in these categories, including Hopf operads, in particular as developed in \cites{MR2423811, MR2674654}. Throughout this paper, we adopt the following conventions:
\begin{itemize}
	\item $\propP$ denotes a cocommutative connected Hopf symmetric operad;
	\item coalgebras are always assumed to be coassociative and cocommutative;
	\item the base field $\field$ has characteristic zero.
\end{itemize}
In particular, we will always assume $\propP(0) = \field 1_0$. Furthermore, we shall freely write either $p(v_1, \dots, v_m)$ or $p(v_1 \otimes \dots \otimes v_m)$ for the action of $p \in \propP(m)$ on elements $v_1, \dots, v_m$ of a $\propP$-algebra in $\grVect$. Occasionally, we use the notation $\underline{\otimes}$ to emphasize that a tensor product is balanced with respect to a group action.

A pseudo-composition $\pi$ of a set $A$, denoted by $\pi \models A$, is a tuple $\pi = (\pi_1, \dots, \pi_r)$ of sets (the parts of $\pi$) such that $A$ is the disjoint union of $\pi_1, \dots, \pi_r$. Empty parts are allowed. The type of $\pi$ is $\typ(\pi) = (|\pi_1|, \dots, | \pi_r |)$, where $|\cdot|$ denotes cardinality. The length of $\pi$ is $\longi(\pi) := r$, and its weight is $|\pi| = |\pi_1| + \dots + |\pi_r|$. 

Let $\comp^\#(A)$ denote the set of all pseudo-compositions of $A$, and let $\comp(A)$ be the set of all compositions (i.e., pseudo-compositions with no empty parts). For $n \geq 1$, we let $[n]$ denote the set $\{1, \dots, n\}$, and define $\comp^\#_n = \comp^\#([n])$ and $\comp_n = \comp([n])$. 

For integers, a pseudo-composition of $n$, denoted by $I \models n$, is a tuple $I = (i_1, \dots, i_r)$ of non-negative integers such that $n = i_1 + \dots + i_r$; its length is likewise $\longi(I) := r$. If all $i_j$ are positive, $I$ is a composition of $n$. A partition $P = \{ i_1, \dots, i_r \}$ of $n$, denoted by $P \vdash n$, is the unordered analogue of a composition. As usual, we can represent a partition by a tuple $(i_1, \dots, i_r)$ such that $i_1 \geq \dots \geq i_r$. The symmetric group $\bS_r$ acts on pseudo-compositions of length $r$ via:
\[
\tau \pi := ( \pi_{\tau^{-1}(1)}, \dots, \pi_{\tau^{-1}(r)})=: \pi \tau^{-1} \quad \text{and} \quad \tau I := ( i_{\tau^{-1}(1)}, \dots, i_{\tau^{-1}(r)}):= I \tau^{-1}.
\]

Standardization maps $\pi \in \comp^\#(A)$ (for $A \subseteq [n]$) to $\st(\pi) \in \comp^\#_{|A|}$ by re-indexing the elements of $\pi$ to preserve their relative order. Conversely, the operator $\utimes$ combines $\pi \in \comp^\#_n$ and $\pi' \in \comp^\#_m$ to produce $\pi \utimes \pi' \in \comp^\#_{n+m}$ by shifting the elements of $\pi'$ by $n$ and concatenating the result to $\pi$. For example:
\begin{align*}
	\st((\{3,8\}, \{2,4\},\{ 6\})) &= (\{2,5\},\{1,3\},\{4\}), \\
	(\{2,3\},\{1,4\}) \utimes (\{3\},\{1,2\}) &= (\{2,3\},\{1,4\}, \{7\}, \{5,6\}).
\end{align*}

The restriction of $\pi = (\pi_1, \dots, \pi_r) \in \comp^\#(A)$ to a subset $S \subseteq A$ is defined as:
\begin{equation*}
	\pi|_S := (\pi_1 \cap S, \dots, \pi_r \cap S).
\end{equation*}
For instance, for $p = 2$, $m = 6$, and $\pi = (\{2,3\}, \{4,5\}, \{1,6\})$, we have:
\begin{align*}
	\pi|_{[p]} \otimes \st(\pi_{[m] \setminus [p]}) &= (\{2\}, \emptyset, \{1\}) \otimes \st(\{3\}, \{4,5\}, \{6\}) \\
	&= (\{ 2\}, \emptyset, \{1\}) \otimes (\{1\}, \{2,3\}, \{4\}).
\end{align*}

Permutations $\sigma \in \bS_n$ are written as tuples $(\sigma(1), \dots, \sigma(n))$. A shuffle $\Sh(\pi) = \Sh(\pi_1, \dots, \pi_r)$ is associated with any pseudo-composition $\pi$ of $[n]$. For example:
\begin{equation*}
	\Sh(\{2,3\}, \{1,4\}, \{7\}, \{5,6\}) = (2,3,1,4,7,5,6)^{-1}.
\end{equation*}

Given $\typ(\pi)$, the composition $\pi$ is uniquely determined by $\Sh(\pi)$. A shuffle is said to have type $I$ if it can be written as $\Sh(\pi)$ for some $\pi$ of type $I$. We denote by $\Sh_{i_1,\dots,i_r}$ the set of all shuffles in $\bS_n$ of type $I=(i_1,\dots,i_r)$, and set
\begin{displaymath}
q_{i_1,\dots,i_r} := \sum_{\sigma \in \Sh_{i_1,\dots,i_r}} \sigma.
\end{displaymath}

\section{A relatively free $\propP$-algebra $\propP_\Omega(C)$ associated to a coalgebra} 
\label{sec:relatively free}

The connection of this paper with symmetric functions stems from the fact that these form a bialgebra freely generated, as an algebra, by a coalgebra encoding much of their combinatorics. Working with such structures naturally leads beyond the operadic framework. While operads suffice to describe purely algebraic operations, the identities of interest in the theory of loops involve a nontrivial interaction between multiplication and comultiplication, such as the Moufang--Hopf identity:
\begin{equation}\label{eq:Moufang-Hopf}
	\sum x_{(1)}(y(x_{(2)} z)) = \sum ((x_{(1)} y) x_{(2)}) z,
\end{equation}
represented by the diagram:
\begin{displaymath}
	\gbeg{4}{9}
	\gcmu \gcl{1} \gcl{1} \gnl
	\gcl{1} \gbr \gcl{1} \gnl
	\gcl{1} \gcl{1} \gmu \gnl
	\gcl{1} \gcl{1} \gcn{1}{1}{2}{1} \gnl
	\gcl{1} \gmu \gnl
	\gcl{1} \gcn{1}{1}{2}{1} \gnl
	\gmu \gnl
	\gend
	=
	\gbeg{4}{9}
	\gcmu \linea \linea \gnl
	\linea \cruce \linea \gnl
	\gmu \linea \linea \gnl
	\gcn{2}{1}{2}{3} \linea \linea \gnl
	\gvac{1} \gmu \linea \gnl
	\gvac{1} \gcn{2}{1}{2}{3} \linea \gnl
	\gvac{2} \gmu \gnl
	\gend
\end{displaymath}
As a consequence, although an operadic formulation is possible, these identities inherently involve both operations and cooperations, and are therefore more naturally expressed in the language of props. Fortunately, bialgebras generated as algebras by coalgebras can be viewed as algebras (in the category of coalgebras) over suitable operads.

\subsection{A controlling prop $\propH$ and a controlling operad $\propP_\Omega$} 
We adopt the convention that for elements $h, h'$ in a prop, if their composition is defined, $hh'$ denotes vertical composition and $h \otimes h'$ denotes horizontal composition \cite{MR171826}.

Let $\propP$ be a cocommutative connected Hopf symmetric operad and let $\propH$ be the prop generated by $\propP$ and the cooperations $\delta_2 \in \propH(2,1)$, $\delta_1 \in \propH(1,1)$, and $\delta_0 \in \propH(0,1)$ (i.e., one input and two outputs, etc.), subject to the following relations:
\begin{itemize}
	\item[(R1)] \textit{Coassociativity}: $(\delta_2 \otimes \delta_1) \delta_2 = (\delta_1 \otimes \delta_2) \delta_2$.
	\item[(R2)] \textit{Cocommutativity}: $(2,1) \delta_2 = \delta_2$.
	\item[(R3)] \textit{Counitality}: $(\delta_0 \otimes \delta_1) \delta_2 = \delta_1 = (\delta_1 \otimes \delta_0) \delta_2$.
	\item[(R4)] \textit{Distributive laws}: for any $p \in \propP(n)$ ($n \geq 0$),
	\begin{displaymath}
		\delta_2 p = \sum (p_{(1)} \otimes p_{(2)}) \sigma_{n,n} \delta^{\otimes n}_2 \quad \text{and} \quad \delta_0 p = \epsilon(p) \delta^{\otimes n}_0.
	\end{displaymath}
	\item[(R5)] $\delta_1 = 1_1 \in \propP(1)$.
\end{itemize}
Here, $\sigma_{n,n} = (1, n+1, 2, n+2, \dots, n, 2n)$, while $p \mapsto \sum p_{(1)} \otimes p_{(2)}$ and $p \mapsto \epsilon(p)$ denote the comultiplication and counit on $\propP(n)$, respectively. Consequently, $\propH$-algebras are equivalent to cocommutative Hopf $\propP$-algebras.

We extend the Hopf structure of $\propP$ to $\propH$ by defining, on generators:
\begin{align*}
	\delta(p) &= \sum p_{(1)} \otimes p_{(2)}, & \delta(\delta_2) &= \delta_2 \otimes \delta_2, & \delta(\delta_0) &= \delta_0 \otimes \delta_0, \\
	\epsilon(p) &= \epsilon(p), & \epsilon(\delta_2) &= 1, & \epsilon(\delta_0) &= 1,
\end{align*}
and on permutations, horizontal, and vertical compositions:
\begin{align*}
	\delta(\sigma) &= \sigma \otimes \sigma, & \delta(hh') &= \delta(h)\delta(h'), & \delta(h \otimes h') &= \sum (h_{(1)} \otimes h'_{(1)}) \otimes (h_{(2)} \otimes h'_{(2)}), \\
	\epsilon(\sigma) &= 1, & \epsilon(hh') &= \epsilon(h) \epsilon(h'), & \epsilon(h \otimes h') &= \epsilon(h) \epsilon(h').
\end{align*}
The compatibility of $\delta$ with the distributive laws follows from the cocommutativity of $\propP$. For example, $\delta(\delta_2 p) = \delta(\delta_2)\delta(p)$, and expanding both sides shows they agree. The following proposition summarizes the properties of $\propH$, where $\propCom^c$ denotes the commutative cooperad.

\begin{proposition} \label{prop:Omega}
	\begin{enumerate}
		\item For any $m, n \geq 0$, we have the decomposition:
		\begin{displaymath}
			\propH(n,m) = \sum_{r \geq 0} \propP^{\otimes n}(r) \bS_r \propCom^{c\otimes m}(r).
		\end{displaymath}
		\item For any $m, n \geq 0$, the subspace $(\propH(n,m), \delta, \epsilon)$ is a coalgebra in $\Vect_{\field}$.
		\item $\propH(1,*) = \bigoplus_{m \geq 0} \propH(1,m)$ is a cocommutative Hopf operad.
		\item For any $n \geq 1$, the elements $\Delta_n = \sigma_{n,n}(\delta_2 \otimes \dots \otimes \delta_2) \in \propH(2n,n)$ satisfy $\Delta_n h = \delta(h) \Delta_m$ for all $h \in \propH(n,m)$.
		\item Let $\Omega = \bigcup_{k \geq 0} \Omega_k$, where each $\Omega_k$ spans a coideal of $\propH(1,k)$. Then the subspace
		\begin{equation}
			\tilde{\Omega}_m := \sum_{k \geq 0, j \geq 1} \propH(1,j) (1_1 \otimes \dots \otimes \Omega_k \otimes \dots \otimes 1_1) \propH(j-1 + k, m)
		\end{equation}
		is a coideal of $\propH(1,m)$. The sum $\tilde{\Omega} = \bigoplus_{m \geq 0} \tilde{\Omega}_m$ is an ideal of the operad $\propH(1,*)$, and the quotient $\propP_\Omega = \propH(1,*)/\tilde{\Omega}$ is a cocommutative Hopf operad.
	\end{enumerate}
\end{proposition}

The set $\Omega$, and its extension to the ideal $\tilde{\Omega}$, collects identities involving the operadic operations in $\propP$ as well as the comultiplication $\delta_2$ and counit $\delta_0$. However, $\propP_\Omega$ remains an operadic structure. Note that since $h(1_1, \dots, 1_0, \dots, 1_1) \in \tilde{\Omega}$ whenever $h \in \tilde{\Omega}$, the ideal $\tilde{\Omega}$ accounts for the existence of unit elements in the algebras.

\subsection{The relatively free $\propP$-algebra $\propP_\Omega(C)$} 
Let $C' = (\bigoplus_{n \geq 1} C_n, \Delta')$ be a non-counital cocommutative coalgebra in $\Und$. Since $\field 1_0 = \propP(0) = (\propP \circ C')(0)$, we identify $\field 1_1 \otimes C'(n)$ with $C'(n)$ and define:
\begin{displaymath}
	C := \bigoplus_{n \geq 0} C_n := \field 1_0 \oplus \bigoplus_{n \geq 1} C'(n) \subseteq \propP \circ C'.
\end{displaymath}
We endow $C$ with the operations:
\begin{align*}
	\Delta(c) &:= 1_0 \otimes c + c \otimes 1_0 + \Delta'(c), & \Delta(1_0) &:= 1_0 \otimes 1_0, \\
	\epsilon(1_0) &:= 1, & \epsilon(C') &:= 0,
\end{align*}
for $c \in C'$, resulting in a cocommutative counital coalgebra $(C, \Delta, \epsilon)$ that generates the free $\propP$-algebra $\propP \circ C'$. These maps extend to $\propP$-algebra homomorphisms $\Delta \colon \propP \circ C' \longrightarrow (\propP \circ C') \otimes (\propP \circ C')$ and $\epsilon \colon \propP \circ C' \longrightarrow \field$, making $\propP \circ C'$ an $\propH$-algebra.

\begin{proposition}
	Let $C'$ be a non-counital cocommutative coalgebra in $\Und$. Then $\propP \circ C'$ is a cocommutative connected Hopf $\propP$-algebra. Moreover, for any $\tilde{\Omega}$ as defined above, the space
	\begin{displaymath}
		\tilde{\Omega}(C) := \spann_{\field}\{ h(c_1, \dots, c_m) \mid h \in \tilde{\Omega}_m, c_i \in C \}
	\end{displaymath}
	is an ideal and a coideal of $\propP \circ C'$.
\end{proposition}

\begin{proof}
	To see that $\propP \circ C'$ is a cocommutative Hopf $\propP$-algebra, it suffices to verify that the required $\propP$-algebra homomorphisms agree on the generators $C'$. Connectivity follows from $(\propP \circ C')(0) = \field 1_0$. 
	By definition, $\tilde{\Omega}(C)$ is an ideal. For $h \in \tilde{\Omega}_m$, we have $\Delta(h(c)) = (\delta_2 h)(c) = \delta(h) \Delta(c)$. Since $\tilde{\Omega}$ is a coideal and the comultiplication on $C^{\otimes m}$ is compatible, the result follows.
\end{proof}

Later, in Section~\ref{sec:example}, we will freely reuse the notation
\[
\Omega(\propP \circ C') := \{\, h(u_1,\dots, u_m) \mid h \in \Omega,\ u_i \in \propP \circ C' \,\}.
\]

We define the cocommutative connected Hopf $\propP$-algebra associated with $C'$ as:
\begin{equation}\label{eq:C_relatively_free}
	\propP_{\Omega}(C) := (\propP \circ C') / \tilde{\Omega}(C).
\end{equation}
Note that while $\propP_\Omega(C)$ is a Hopf $\propP_\Omega$-algebra, it may not coincide with $\propP_\Omega \circ C'$, as $\propP_\Omega$ can be significantly larger than $\propP$. Nevertheless, $\propP_\Omega(C)$ provides a more manageable framework.

\begin{proposition}
	Let $H$ be a cocommutative connected Hopf $\propP$-algebra in $\Und$, and let $\varphi \colon C \longrightarrow H$ be a homomorphism of cocommutative connected coalgebras. If $\tilde{\Omega}(\varphi(C)) = \{0\}$, then $\varphi$ extends uniquely to a homomorphism $\varphi \colon \propP_{\Omega}(C) \longrightarrow H$ of connected Hopf $\propP$-algebras.
\end{proposition}

\section{Relations with Hopf algebras of symmetric functions and descent algebras}
As previously noted, our goal is to contextualize the associative internal product that naturally appears in certain universal enveloping algebras of Sabinin algebras, which are generally highly non-associative. Since these enveloping algebras are generated by coalgebras—just as Hopf algebras of symmetric functions \cite{MR0506405}—one may expect the latter to play a role in this setting. The purpose of this section is to make this connection precise. While most of the results are non-associative counterparts of known facts, they show that the basic constructions involving symmetric functions and descent algebras remain meaningful.

\subsection{The Hopf $\propP$-algebra of symmetric functions}

We begin by recalling the standard coalgebra underlying symmetric functions. Let $E'$ be the coalgebra in $\grVect$ defined by
\begin{displaymath}
	E' := \bigoplus_{n \geq 1} \field e_n, \qquad \Delta'(e_1) := 0, \qquad \Delta'(e_n) := \sum_{i=1}^{n-1} e_i \otimes e_{n-i}.
\end{displaymath}
Extending by $e_0 := 1_0$, we obtain a counital coalgebra $E := \field 1_0 \oplus E'$ with
\begin{displaymath}
	\Delta(e_n) := \sum_{i=0}^n e_i \otimes e_{n-i}.
\end{displaymath}

\begin{definition}
	The Hopf $\propP$-algebra of symmetric functions satisfying the identities $\Omega$ (or $\tilde{\Omega}$) is defined as
	\[
	\propP_\Omega(E) := \propP \circ E' / \tilde{\Omega}(E).
	\]
\end{definition}

\begin{remark}
	By \cite{MR2674654}, $\propP_\Omega(E)$ is a cocommutative Hopf $\propP$-algebra.
\end{remark}

\begin{example}
When $\tilde{\Omega} = \{0\}$ and $\propP$ is the commutative or associative operad, one recovers the classical Hopf algebras of symmetric and noncommutative symmetric functions. The examples we have in mind, however, lie closer to the non-associative setting corresponding to $\propP = \Mag$, with $\Omega = \Omega_3$ given by the Moufang-Hopf identity~\eqref{eq:Moufang-Hopf}.
\end{example}

We now describe the descent algebra associated with $\propP_\Omega(E)$ and show that it recovers the whole structure. Let $\End_{\field}(\propP_\Omega(E))$ denote the space of linear endomorphisms. The subspace of degree-preserving maps with finite support is a $\propP$-algebra under the convolution product
\begin{displaymath}
	p(f_1,\dots,f_n)(u) := \sum p(f_1(u_{(1)}),\dots,f_n(u_{(n)})).
\end{displaymath}
In particular, the projections $\Id_n$ onto $\propP_\Omega(E)(n)$ define a graded subcoalgebra with
\begin{displaymath}
	\Delta(\Id_n) := \sum_{i=0}^n \Id_i \otimes \Id_{n-i}.
\end{displaymath}

The $\propP$-subalgebra generated by the $\Id_n$ is the descent algebra $\Sigma^{\propP_\Omega(E)}$. The assignment $e_n \mapsto \Id_n$ defines a morphism
\begin{equation} \label{eq:isomorphism_descent}
	\varphi \colon \propP \circ E' \longrightarrow \Sigma^{\propP_\Omega(E)}.
\end{equation}

The following result provides an interpretation of $\propP_\Omega(E)$ in terms of Patras' descent algebras.

\begin{theorem} \label{thm:isomorphism_symmetric_convolution}
	The map $\varphi$ induces an isomorphism of $\propP$-algebras
	\[
	\propP_\Omega(E) \cong \Sigma^{\propP_\Omega(E)},
	\qquad [e_n] \longmapsto \Id_n.
	\]
\end{theorem}

Before proving the theorem, we establish two technical lemmas.

\begin{lemma} \label{lem:evaluation_f}
	For any $h \in \propH(1,m)$, $e_{k_1},\dots,e_{k_m} \in E$, and $v \in \propP_\Omega(E)$,
	\[
	\varphi(h(e_{k_1},\dots,e_{k_m}))(v)
	= \sum h(\Id_{k_1}(v_{(1)}),\dots,\Id_{k_m}(v_{(m)})).
	\]
\end{lemma}

\begin{proof}
	We may assume $h = p d$, with $p \in \propP(r)$ and $d \in \propCom^{c\otimes m}(r)$. Then
	\begin{align*}
		\varphi(h(e_{k_1},\dots,e_{k_m}))(v)
		&= p(\varphi^{\otimes r}(d(e_{k_1}\otimes\cdots\otimes e_{k_m})))(v) \\
		&= \sum p\big(d(\Id_{k_1}(v_{(1)})\otimes\cdots\otimes\Id_{k_m}(v_{(m)}))\big),
	\end{align*}
	which yields the result.
\end{proof}

\begin{lemma} \label{lem:Mackey}
	Let $I=(i_1,\dots,i_r)$ and $K=(k_1,\dots,k_m)$. For $h \in \propH(1,m)$ and $p \in \propP(r)$,

\begin{align*}
	\varphi(h(e_{k_1}, \dots, &e_{k_m})) \varphi(p(e_{i_1}, \dots, e_{i_r})) \\
	& = \sum_{(j_{l,q})} \varphi(h(p_{(1)}(e_{j_{1,1}}, \dots, e_{j_{r,1}}), \dots, p_{(m)}(e_{j_{1,m}}, \dots, e_{j_{r,m}}))),
\end{align*}
	where $(j_{l,q})$ ranges over matrices with non-negative integer entries such that the $l$-th row sum is $i_l$ and the $q$-th column sum is $k_q$. 
\end{lemma}

\begin{proof}
	Evaluating on $v$ and using Lemma~\ref{lem:evaluation_f}, one obtains the desired expression by cocommutativity.
\end{proof}

\begin{proof}[Proof of Theorem~\ref{thm:isomorphism_symmetric_convolution}]	
	We determine the kernel of $\varphi$. For $u \in \propP \circ E'(n)$, we have
	\[
	\varphi(u)([e_n]) = u + \tilde{\Omega}(E),
	\]
	which shows that $\ker \varphi \subseteq \tilde{\Omega}(E)$.
	
	Conversely, let $h(e_{k_1},\dots,e_{k_m}) \in \tilde{\Omega}(E)$. Then, by Lemma~\ref{lem:Mackey}, for any $u \in \propP \circ E'(n)$ we have
	\[
	\varphi(h(e_{k_1},\dots,e_{k_m}))([u])
	= \varphi(h(e_{k_1},\dots,e_{k_m})) \, \varphi(u)([e_n]) = [0],
	\]
	since elements of $\varphi(\tilde{\Omega}(E))$ annihilate $[e_n]$. This shows that $\tilde{\Omega}(E) \subseteq \ker \varphi$.
\end{proof}

Via this identification, the composition in $\Sigma^{\propP_\Omega(E)}$ induces an internal associative product $\circ$ on $\propP_\Omega(E)$.

\begin{corollary}[Mackey formula]
For $n \ge 0$, the space $\propP_\Omega(E)(n)$ becomes an associative algebra with product
	\begin{align*}
	[q(e_{k_1},\dots,e_{k_m})]\circ[p(e_{i_1},\dots,e_{i_r})] & \\
	&\hskip -2cm
	:= \sum_{(j_{l,q})} [q(p_{(1)}(e_{j_{1,1}}, \dots, e_{j_{r,1}}),\dots,p_{(m)}(e_{j_{1,m}},\dots, e_{j_{r,m}}))],
\end{align*}
where $(j_{l,q})$ ranges over the same set as in Lemma~\ref{lem:Mackey}.
\end{corollary}

\begin{corollary}[Splitting formula]
	For $f_1,\dots,f_n,g \in \Sigma^{\propP_\Omega(E)}$ and $p \in \propP(n)$,
	\[
	p(f_1\otimes\cdots\otimes f_n)\circ g
	= \sum p(f_1\circ g_{(1)} \otimes \cdots \otimes f_n \circ g_{(n)}).
	\]
\end{corollary}

\begin{proof}
	This follows from the compatibility $\Delta(f(u)) = \Delta(f)\Delta(u)$.
\end{proof}

\subsection{A twisted analogue of $\propP_\Omega(E)$  and Solomon's descent algebra} 
Following the approach of \cite{MR2074989}, it is expected that a combinatorial realization of $\propP_\Omega(E)$ in $\Smod$ would provide a broader setting for the Malvenuto--Reutenauer algebra and, consequently, for Solomon's descent algebra.

To define a twisted version of $\propP_{\Omega}(E)$, we consider the non-counital coalgebra $E'^\tau := \bigoplus_{n \geq 1} \field e_n$ in $\Smod$, equipped with coproduct
\begin{displaymath}
	\Delta'^\tau(e_n) := \sum_{i=1}^{n-1} e_i \otimes e_{n-i} \uotimes q_{i,n-i},
\end{displaymath}
where $q_{i,n-i}$ denotes the sum of all $(i,n-i)$-shuffles in $\bS_n$. This extends to a counital coalgebra $E^\tau := \field 1_0 \oplus E'^\tau$ with coproduct $\Delta^\tau$.

In analogy with \eqref{eq:C_relatively_free}, we define
\begin{displaymath}
	\propP_{\Omega}(E^\tau) := \propP \circ E'^{\tau} \big/ \tilde{\Omega}(E^\tau),
\end{displaymath}
which inherits the structure of a Hopf $\propP$-algebra in $\Smod$.

Its symmetrization $\overline{\propP_\Omega(E^\tau)}$ is then a Hopf $\propP$-algebra in $\grVect$ \cite{MR2674654}. Since $\propP_\Omega(E^\tau)$ is an algebra over the Hopf operad $\propP_\Omega$, we obtain:

\begin{corollary}\label{cor:Livernet}
	$\overline{\propP_{\Omega}(E^\tau)}$ is a Hopf $\propP_\Omega$-algebra in $\grVect$. In particular, the evaluation of $\tilde{\Omega}$ on this algebra is trivial.
\end{corollary}

\begin{remark}\label{cor:symmetrized_is_Omega_algebra}
	In general, $\overline{\propP_\Omega(E^\tau)}$ is not cocommutative (see, for example, $\field[\comperm]$ in Section~\ref{sec:absolutely_free}).
\end{remark}

We now recall the underlying vector space of Solomon's descent algebra. A descent of a permutation $\sigma = (\sigma(1), \dots, \sigma(n))$ is an index $1 \leq i \leq n-1$ such that $\sigma(i) > \sigma(i+1)$. For each pseudo-composition $I=(i_1,\dots, i_r)$ of $n$, let $D_{\leq I}$ be the sum of all permutations whose descent set is contained in $\{i_1, i_1+i_2, \dots, i_1+\cdots+i_{r-1}\}$. These elements span a subspace $\Sol(\bS)(n)$.

The graded space $\Sol(\bS) :=\bigoplus_{n\ge0}\Sol(\bS)(n)$ is the underlying space of Solomon's descent algebra, which is a Hopf subalgebra of $\field[\bS] :=\bigoplus_{n\ge0}\field[\bS_n]$ under the Malvenuto--Reutenauer structure. Each component is stable under the internal product $\circ$ induced by the group algebra, extended by $0$ across different degrees.

Motivated by this situation, we define, for $p \in \propP(r)$ and a pseudo-composition $I=(i_1,\dots,i_r)$ of $n$,
\begin{displaymath}
	D^p_{\leq I} := \left[\sum_{\typ(\pi)=I} p \uotimes e_{i_1}\otimes\cdots\otimes e_{i_r} \uotimes \Sh(\pi)\right] \in \propP_\Omega(E^\tau)(n).
\end{displaymath}

These elements span subspaces $\sol_\Omega(n)\subseteq \propP_\Omega(E^\tau)(n)$, and we set
\begin{displaymath}
	\sol_\Omega :=\bigoplus_{n\ge0}\sol_\Omega(n)\subseteq \propP_\Omega(E^\tau).
\end{displaymath}

\begin{remark}
	For a given $I$, the tensor $e_{i_1}\otimes\cdots\otimes e_{i_r}\uotimes\Sh(\pi)$ is determined by $\Sh(\pi)^{-1}$, which corresponds to a permutation with descents contained in $\{i_1,\dots,i_1+\cdots+i_r\}$.
\end{remark}

\begin{theorem}\label{thm:isomorphism_symmetric_Solomon}
	The space $\sol_\Omega$ is a Hopf $\propP$-subalgebra of $\overline{\propP_\Omega(E^\tau)}$. Moreover,
	\[
	\psi\colon \propP_\Omega(E)\longrightarrow \sol_\Omega,\qquad
	[p\uotimes e_{i_1}\otimes\cdots\otimes e_{i_m}]\mapsto D^p_{\leq(i_1,\dots,i_m)}
	\]
	is an isomorphism of Hopf $\propP$-algebras in $\grVect$.
\end{theorem}

\begin{proof}
	Let $\hat{p}$ denote the product induced in $\overline{\propP_\Omega(E^\tau)}$, and $p$ that of $\propP_\Omega(E^\tau)$. For any composition $(i_1,\dots,i_r)\models n$,
	\begin{align*}
		\hat{p}(D^{1_1}_{\le(i_1)},\dots,D^{1_1}_{\le(i_r)})
		&= \hat{p}([e_{i_1}],\dots,[e_{i_r}]) = [p(e_{i_1}\otimes\cdots\otimes e_{i_r}\uotimes q_{i_1,\dots,i_r})] \\
		&= D^p_{\le(i_1,\dots,i_r)}.
	\end{align*}
	This extends to arbitrary products:
	\[
	\hat{p}(D^{p_1}_{\le I_1},\dots,D^{p_r}_{\le I_r})
	= D^{p(p_1,\dots,p_r)}_{\le I_1\Vert\cdots\Vert I_r},
	\]
	where $\rVert$ denotes concatenation, showing that $\sol_\Omega$ is the $\propP$-subalgebra generated by the elements $D^{1_1}_{\le(i)}$. For the coproduct, we have
	\begin{equation}\label{eq:Delta_hat_clean}
		\hat{\Delta}(D^{1_1}_{\le(n)})=\sum_{i=0}^n D^{1_1}_{\le(i)}\otimes D^{1_1}_{\le(n-i)}.
	\end{equation}
	
	The homomorphism  $\psi_0:\propP\circ E'\longrightarrow \sol_\Omega$ determined by is an epimorphisms of Hopf $\propP$-algebras. Thus, it only remains to show that $\ker\psi_0=\tilde{\Omega}(E)$.
	
By Corollary~\ref{cor:Livernet}, we have $\tilde{\Omega}(E)\subseteq\ker\psi_0$.  Conversely, the space $\propP \circ\, E'$ admits a decomposition as a direct sum of homogeneous components, where the component corresponding to a partition $\{i_1,\dots,i_r\}$ of $n$ is 
\[
\propP(r) \uotimes e_{i_1} \otimes \cdots \otimes e_{i_r}.
\]
Similarly, for $\propP \circ\, E'^{\tau}$, the corresponding homogeneous component is 
\[
\propP(r) \uotimes e_{i_1} \otimes \cdots \otimes e_{i_r} \uotimes \Sh_{i_1,\dots,i_r}.
\]

Since $\tilde{\Omega}(E^\tau)$ is a homogeneous subspace of $\propP \circ\, E'^\tau$, it follows that $\ker \psi_0$ is itself a homogeneous subspace of $\propP \circ\, E'$. Moreover, there is a well-defined projection
\begin{align*}
	\propP \circ\, E'^{\tau} &\longrightarrow \propP \circ\, E', \\
	p \uotimes e_{i_1} \otimes \cdots \otimes e_{i_r} \uotimes \sigma 
	&\longmapsto 
	p \uotimes e_{i_1} \otimes \cdots \otimes e_{i_r},
\end{align*}
which is well-defined because each $\field e_m$ is a trivial $\bS$-module. Now let $p \uotimes e_{i_1} \otimes \cdots \otimes e_{i_r} \in \ker \psi_0$. Then the element
\[
p \uotimes e_{i_1} \otimes \cdots \otimes e_{i_r} \uotimes q_{i_1,\dots,i_r}
\]
can be written as
\[
\sum_j h_j \uotimes e_{i_1} \otimes \cdots \otimes e_{i_r} \uotimes \sigma_j,
\]
with $h_j \in \tilde{\Omega}$ and $\sigma_j \in \bS_n$. Applying the projection yields
\[
\frac{n!}{i_1!\cdots i_r!}\, 
p \uotimes e_{i_1} \otimes \cdots \otimes e_{i_r}
=
\sum_j h_j \uotimes e_{i_1} \otimes \cdots \otimes e_{i_r}.
\]

This shows that $p \uotimes e_{i_1} \otimes \cdots \otimes e_{i_r} \in \tilde{\Omega}(E)$, hence $\ker \psi_0 \subseteq \tilde{\Omega}(E)$, which completes the proof.
\end{proof}

\begin{remark}
	If $\operatorname{char}(\field)=p>0$, the map $\psi$ need not be injective, since
	\begin{displaymath}
	\psi([p\otimes e_1^{\otimes m}])=m!\,p\otimes e_1^{\otimes m}\uotimes\Id.
	\end{displaymath}
\end{remark}

\begin{corollary}\label{cor:main}
	We have the following isomorphisms of Hopf $\propP$-algebras in $\grVect$:
	\begin{displaymath}
		\Sigma^{\propP_\Omega(E)} \cong \propP_\Omega(E) \cong \sol_\Omega.
	\end{displaymath}
\end{corollary}

We now describe the internal product on $\sol_\Omega$. To this end, we consider the set $\comp^\#_n$ of pseudo-compositions of $\{1,\dots,n\}$. Any $\pi=(\pi_1,\dots,\pi_r) \in \comp^\#_n$ uniquely determines a permutation $\sigma^\pi=\Sh(\pi)^{-1}$ obtained by listing the elements of $\pi_1$ increasingly, followed by those of $\pi_2$, and so on. For example, if
\[
\pi = (\{3,4\}, \emptyset, \{5,2\}, \{1\}),
\]
then $\sigma^\pi = (3,4,2,5,1)$.

The symmetric group $\bS_n$ acts on $\comp^\#_n$ by
\[
\sigma(\pi) = (\sigma(\pi_1),\dots,\sigma(\pi_r)).
\]
Given $\pi,\pi'\in\comp^\#_n$, we define
\[
\pi(\pi') := \sigma^\pi(\pi'),
\]
and introduce the product
\begin{equation}\label{eq:circ_clean}
	\pi \circ \pi' :=
	\bigl(
	\pi(\pi'_1)\cap \pi_1,\dots,\pi(\pi'_1)\cap \pi_r,\;
	\dots,\;
	\pi(\pi'_s)\cap \pi_1,\dots,\pi(\pi'_s)\cap \pi_r
	\bigr),
\end{equation}
where $\pi'=(\pi'_1,\dots,\pi'_s)$. In other words, for each block $\pi'_k$ we distribute $\pi(\pi'_k)$ across the blocks of $\pi$ and list the intersections in lexicographic order (first varying $k$, then the components of $\pi$). Empty parts may be omitted, yielding a well-defined product on compositions.

This product admits the interpretation
\[
\Sh(\pi)^{-1}\Sh(\pi')^{-1} = \Sh(\pi \circ \pi')^{-1}.
\]

\begin{example}
	For
	\[
	\pi =(\{3,5,7\},\{1,4\},\{2,6\}), \qquad
	\bar{\pi} = (\{4,6,7\}, \{1,2,3,5\}),
	\]
	we obtain
	\[
	\pi(\bar{\pi}) =(\{1,2,6\},\{3,4,5,7\}),
	\]
	and
	\[
	\pi \circ \bar{\pi} = (\emptyset,\{1\},\{2,6\},\{3,5,7\},\{4\},\emptyset),
	\]
	which satisfies
	\[
	\Sh(\pi)^{-1} \Sh(\bar{\pi})^{-1}
	= (3,5,7,1,4,2,6)(4,6,7,1,2,3,5)
	= (1,2,6,3,5,7,4)
	= \Sh(\pi \circ \bar{\pi})^{-1}.
	\]
\end{example}

We denote the opposite product by $\pi \bullet \pi' := \pi' \circ \pi$.

\begin{proposition}
	The pair $(\comp^\#_n,\circ)$ is a monoid with identity $(\{1,\dots,n\})$. Moreover, the map
	\[
	\comp^\#_n \longrightarrow \bS_n, \qquad
	\pi \mapsto \sigma^\pi,
	\]
	is an epimorphism of monoids.
\end{proposition}

We now describe the induced product on $\sol_\Omega$. For $p \in \propP(r)$ and $\pi \in \comp^\#_n$ of type $(i_1,\dots,i_r)$, we write
\[
p\pi = p \uotimes e_{i_1} \otimes \cdots \otimes e_{i_r} \uotimes \Sh(\pi).
\]

For $\tau \in \bS_r$, these satisfy the equivariance relation
\[
(p\tau)\pi = p(\tau \pi),
\qquad
\tau \pi = (\pi_{\tau^{-1}(1)}, \dots, \pi_{\tau^{-1}(r)}).
\]

To define the product, we first work in the larger space
\[
\bigoplus_{m \geq 0} \prop{P}(m) \otimes \field[\comp^\#_m].
\]
For elements $p \in \propP(r)$ and $p' \in \propP(s)$ we set
\[
(p \otimes \pi) \bullet (p' \otimes \pi')
= p(p'_{(1)}, \dots, p'_{(r)}) \otimes (\pi \bullet \pi').
\]

This product is compatible with the equivariance in the first factor, and therefore descends to expressions of the form $p\pi$.

However, equivariance in the second factor does not hold in general. More precisely, writing
\[
\pi \wedge \pi' :=
(\pi_1 \cap \pi'_1,\dots,\pi_r \cap \pi'_1,\dots,\pi_r \cap \pi'_s),
\]
one obtains
\begin{align*}
	(p \pi) \bullet (p' \tau \otimes \pi') &= \sum p(p'_{(1)}, \dots, p'_{(r)})\,\pi'(\pi) \wedge \tau \pi', \\
	(p \pi) \bullet (p' \otimes \tau \pi') &= \sum p(p'_{(1)},\dots, p'_{(r)})\,(\tau \pi')(\pi) \wedge (\tau\pi').
\end{align*}
Since these expressions need not coincide, the product $\bullet$ does not extend to the whole space $\propP \circ E'^\tau$.

Nevertheless, for the elements $D^p_{\leq I}$ the dependence on representatives cancels, and the product becomes well-defined:
\[
D^p_{\leq I} \bullet p'\pi'
= \sum_{\typ(\pi)=I} p(p'_{(1)},\dots, p'_{(r)})\, \pi \bullet \pi'.
\]
It follows that for $I' = (k_1,\dots, k_m)$,
\begin{equation}\label{eq:product_DI}
D^p_{\leq I} \bullet D^{p'}_{\leq I'} 
= \sum_J D^{p(p'_{(1)},\dots, p'_{(r)})}_{\leq J},
\end{equation}
where $J$ is determined by the same combinatorics as in Lemma~\ref{lem:Mackey}.

\begin{corollary}\label{cor:internal_product_on_P-Sol}
	Let $\psi$ and $\varphi$ be the isomorphisms from Theorems~\ref{thm:isomorphism_symmetric_Solomon} and \ref{thm:isomorphism_symmetric_convolution}, respectively. Then $\psi\varphi^{-1}$ is an isomorphism between the algebras $(\Sigma^{\propP_\Omega(E)}, \circ)$ and $(\sol_\Omega, \bullet)$. 
\end{corollary}

\begin{remark}
	While it is tempting to view $\overline{\propP_\Omega(E^\tau)}$ as a generalization of the Malvenuto--Reutenauer Hopf algebra, the absence of a globally well-defined product $\bullet$ precludes such a direct identification. 
\end{remark}

These constructions will be made explicit in the case of universal enveloping algebras in the next section.

\section{Example: Universal enveloping algebras of Sabinin algebras}
\label{sec:example}
Sabinin algebras constitute the infinitesimal counterparts of analytic loops \cite{MR924255}, representing 
a non-associative generalization of Lie algebras that encompasses structures such as 
Malcev algebras, Bol algebras, and Lie triple systems. These algebras can be 
characterized as the spaces of primitive elements of suitable universal enveloping 
algebras—essentially, a class of cocommutative but non-associative analogues of Hopf 
algebras \cite{MR3174282}. 

In \cite{MR2504663}, Loday successfully undertook a vast generalization of the 
classical ``Three Graces'' $\propCom$-$\propAssoc$-$\propLie$. He proved that, given 
a type of generalized bialgebras $(\propC^c, \propP)$ over a field of characteristic 
zero, if it satisfies the following conditions:
\begin{enumerate}
	\item[(H0)] For any pair $(\delta, \mu)$ of a cooperation $\delta$ and an operation $\mu$, there is a distributive compatibility relation.
	\item[(H1)] The free $\propP$-algebra $\propP \circ A$ is naturally equipped with a $\propC^c\text{-}\propP$-bialgebra structure.
	\item[(H2epi)] The natural coalgebra map $\varphi(V) \colon \propP \circ V \longrightarrow \propC^c \circ V$ is surjective and admits a natural coalgebra splitting $s(V) \colon \propC^c \circ V \longrightarrow \propP \circ V$.
\end{enumerate}
Then, for any $\propC^c\text{-}\propP$-bialgebra $H$, the following assertions are equivalent:
\begin{enumerate}
	\item[a)] The $\propC^c\text{-}\propP$-bialgebra $H$ is connected.
	\item[b)] There is an isomorphism of bialgebras $H \cong U(\Prim H)$.
	\item[c)] There is an isomorphism of connected coalgebras $H \cong \propC^c \circ \Prim H$.
\end{enumerate}

Loday first observed that, for types satisfying conditions (H0) and (H1), the operad $\propP$ contains a suboperad $\Prim \propP$ with the property that, for any $\propC^c\text{-}\propP$-bialgebra $H$, the space $\Prim H$ naturally carries the structure of a $\Prim \propP$-algebra. If, in addition, condition (H2epi) holds, then the triple $(\propC, \propP, \Prim \propP)$ forms a good triple of operads. The assumption (H2epi) allows for the construction of a functorial versal idempotent $e = e_H \colon H \longrightarrow H$. This idempotent is obtained by defining, for each $n \ge 2$, a map $\omega^{[n]} \colon H \longrightarrow H$ as the composite
\begin{displaymath}
	\omega^{[n]} \colon H \xrightarrow{\theta_n} \propC^c(n) \uotimes H^{\otimes n} \xrightarrow{s(n) \otimes \Id} \propP(n) \uotimes H^{\otimes n} \xrightarrow{\gamma_n} H,
\end{displaymath}
where $\theta_n$ and $\gamma_n$ are the corresponding structure maps. The versal idempotent is then defined as the infinite product
\begin{displaymath}
	e := (\Id - \omega^{[2]})(\Id - \omega^{[3]}) \cdots (\Id - \omega^{[n]}) \cdots = \Id + \sum_{r= 1}^\infty \sum_{2 \leq i_1 < \cdots < i_r}  (-1)^r \omega^{[i_1]} \cdots \omega^{[i_r]}.
\end{displaymath}
This idempotent projects $H$ onto $\Prim H$ and constitutes the main tool in Loday's result. As a byproduct, Loday also proved that $\Prim (\propP \circ V) = (\Prim \propP) \circ V$.

Consider now a cocommutative connected Hopf symmetric operad $\propP$. The type $\propCom^c\text{-}\propP$ automatically satisfies (H0) and (H1). Assuming further that $\propCom^c\text{-}\propP$ satisfies (H2epi), we note that $\propP \circ E'$ is a connected $\propCom^c\text{-}\propP$-bialgebra; hence, $\propP \circ E' \cong U(\Prim(\propP \circ E'))$. Let $c_n := e(e_n)$ for $n \geq 1$. From the definition of $e$, the $\propP$-subalgebras generated by $\{e_i\}_{i=0}^n$ and $\{c_i\}_{i=0}^n$ coincide (with $c_0 = e_0 = 1_0$), while $c_{n+1}$ remains outside the subalgebra generated by the previous terms. Thus, $C' = \bigoplus_{n \geq 1} \field c_n$ is spanned by primitive elements, leading to:
\begin{equation}
	\propP \circ E' = \propP \circ C' \cong U(\Prim \propP \circ C').
\end{equation}
Given $\tilde{\Omega}$ as in Proposition~\ref{prop:Omega}, let
\begin{displaymath}
	\Prim \propP_{\partial \Omega}(C') := (\Prim \propP \circ C') / \ideal_{\Prim \propP \circ C'}\langle e(\Omega (\propP \circ C'))\rangle.
\end{displaymath}
A standard argument via the coradical filtration shows that $\tilde{\Omega}(C)$ is the $\propP$-ideal generated by $e(\Omega(\propP \circ C'))$. Consequently:
\begin{displaymath}
	\propP_\Omega (C') = (\propP \circ C') / \tilde{\Omega}(C) \cong U(\Prim \propP_{\partial \Omega}(C') ).
\end{displaymath}

\begin{corollary}\label{cor:four-isomorphisms}
	Let $\propP$ be a cocommutative connected Hopf symmetric operad. If the type of generalized bialgebras $\propCom^c\text{-}\propP$ satisfies (H2epi), then:
	\begin{displaymath}
	\Sigma^{\propP_\Omega(E)} \cong \propP_\Omega(E) \cong \sol_\Omega \cong U(\Prim \propP_{\partial \Omega}(C') ).
	\end{displaymath}
\end{corollary}

Now we focus on our motivating example: the universal enveloping algebras of Sabinin algebras. A loop is a nonempty set equipped with a unit element and a binary product such that the left and right multiplication operators are bijections; intuitively, it can be viewed as a non-associative group. More precisely, a loop is a set with three operations $\cdot, \backslash, /$ subject to the following relations:
\begin{equation}\label{eq:loops}
	x \backslash (x \cdot y) = y = x \cdot (x \backslash y), \quad (x \cdot y)/y = x = (x/y) \cdot y \quad \text{and} \quad x \backslash x = y / y.
\end{equation}
The unit element is given by $e = x \backslash x$. In the category of pointed sets (with the Cartesian product as the tensor product), and identifying $\{e \} \times X \cong X \cong X \times \{e\}$, we define $\delta_2(x) = (x,x), \delta_1(x) = 1_1(x) = x$ and $\delta_0(x) = e$. Thus, the relations in \eqref{eq:loops} can be reformulated as:
\begin{gather*}
	\backslash (1_1 \otimes m)(\delta_2 \otimes 1_1) = \delta_0 \otimes 1_1, \quad m(1_1 \otimes \backslash)(\delta_2 \otimes 1_1) = \delta_0 \otimes 1_1, \\
	/(m \otimes 1_1)(1_1 \otimes \delta_2) = 1_1 \otimes \delta_0, \quad m(/\otimes 1_1) (1_1 \otimes \delta_2) = 1_1 \otimes \delta_0 \quad \text{and}\\
	(\backslash \otimes \delta_0)(\delta_2 \otimes 1_1) = (\delta_0 \otimes /)(1_1 \otimes \delta_2).
\end{gather*}
where $m$ stands for the produce $\cdot$. Furthermore, if we define
\begin{displaymath}
	\delta(p) := p \otimes p \quad \text{and} \quad \epsilon(p) := 1
\end{displaymath}
for any operation $p$ composed of $m, \backslash, /$, then relations (R1)--(R5) in Section~\ref{sec:relatively free} hold tautologically for loops.

To treat these as bialgebras, we consider the (linear) symmetric Hopf operad $\TMag$ generated by the three binary operations $m, \backslash$ and $/$. The Hopf structure arises from the linear extension of the aforementioned $\delta$ and $\epsilon$ to $\TMag$. We then consider the prop generated by $\TMag, \delta_2, \delta_1, \delta_0$ subject to (R1)--(R5). Since we aim to study ``loop objects'' in the category of coalgebras, we must either impose the reformulation of \eqref{eq:loops} to obtain an adequate prop $\propLoop$, or assume that the set $\Omega$ in our construction always contains
\begin{align*}
	\Omega_L := \{ &\backslash(1_1 \otimes m)(\delta_2 \otimes 1_1) - \delta_0 \otimes 1_1, m(1_1 \otimes \backslash)(\delta_2 \otimes 1_1) - \delta_0 \otimes 1_1, \\
	&/(m \otimes 1_1)(1_1 \otimes \delta_2) - 1_1 \otimes \delta_0, m(/\otimes 1_1)(1_1 \otimes \delta_2) - 1_1 \otimes \delta_0, \\
	&(\backslash \otimes \delta_0)(\delta_2 \otimes 1_1) - (\delta_0 \otimes /)(1_1 \otimes \delta_2) \}.
\end{align*}

It can be shown that $\Mag \circ E'$ is indeed a $\propLoop$-algebra, as it admits left and right divisions $\backslash, /$ defined recursively. For instance, $(\sum u/ e_{1(1)}) \cdot e_{1(2)} = \epsilon(e_1)$ implies $u / e_1 + u \cdot e_1 = 0$, i.e., $u/e_1 = - u \cdot e_1$. Consequently:

\begin{proposition}
	$\TMag \circ E' / \tilde{\Omega}_L(E) \cong \Mag \circ E'$ as $\propLoop$-algebras.
\end{proposition}

In short, the formalism of $\TMag$ can be bypassed by working directly with the $\propLoop$-algebra $\Mag \circ E'$, where $\backslash$ and $/$ are defined such that the the identities determined by $\Omega_L$ hold.

Since $(\propCom, \propMag, \propSab)$ constitutes a good triple of operads, Corollary~\ref{cor:four-isomorphisms} is directly applicable. In this context, the identities of interest are those naturally associated with loops; for instance, \eqref{eq:Moufang-Hopf} stems from the Moufang identity $x(y(xz)) = ((x y)x)z$. As established, any such identity can be expressed within the category of pointed sets in terms of $\cdot, \backslash, /, \delta_0, \delta_1$, and $\delta_2$. When viewed in $\propLoop$, these identities span a coideal $\Omega$ for which $\propMag_\Omega(E) = \propMag \circ E' /\tilde{\Omega}(E) \cong U(\propSab_{\partial \Omega}(C'))$. Consequently, for the universal enveloping algebra of the relatively free Sabinin algebra $\propSab_{\partial \Omega}\{ \bX \}$ on $\bX :=\{ x_1, x_2, \dots \}$, we have:
\begin{corollary}\label{cor:main_Sabinin} 
	\begin{displaymath}
		\Sigma^{\propMag_\Omega(E)} \cong \propMag_\Omega(E) \cong \magSol_\Omega \cong U(\propSab_{\partial \Omega}\{ \bX \})
	\end{displaymath}
\end{corollary}
This provides a structural explanation for the associative internal product appearing in the universal enveloping algebras of relatively free Sabinin algebras. A partial version of this result, obtained by different methods using bases of logarithms and avoiding an appeal to Loday’s theorem, can be found in the preprint \cite{1812.04450}, which presents an earlier and more restrictive approach that has not been developed further to date. The preprint \cite{1812.04450} may also be of interest to readers unfamiliar with the operadic language.

\section{The absolutely free non-associative algebra}
\label{sec:absolutely_free}
Although our internal associative product for $\sol_\Omega$ does not possess a canonical extension to a well-defined product on $\overline{\propP_\Omega(E^\tau)}$ in full generality, the situation becomes significantly more tractable for regular operads. We focus our discussion on the specific case where $\propP = \Mag$ and $\Omega = \{ 0 \}$---notwithstanding that other regular operads may be treated similarly---as this setting serves as the primary motivation for this work. Since $\Mag$ is a regular operad, for each $n \geq 1$, the space $\overline{\Mag(E^\tau)}(n)$ admits a $\field$-basis $\comperm_n$ consisting of elements of the form
\begin{displaymath}
	t\pi := t \otimes e_{i_1} \otimes \cdots \otimes e_{i_r} \otimes \Sh(\pi),
\end{displaymath}
where $I = (i_1, \dots, i_r)$ ranges over the compositions of $n$, $\pi$ ranges over the compositions of $\conj{n}$ of type $I$, and $t$ belongs to the set $\PT_n$ of binary rooted planar trees with $n$ leaves. For convenience, we may relax this notation by allowing $I$ and $\pi$ to be pseudo-compositions, subject to the canonical identification.

\begin{example}
	\begin{displaymath}
		\gbeg{7}{5}
		\gmu				\linea   \gvac{1}   \gmu 				\linea   	\gnl
		\gcn{2}{1}{2}{3} 	\linea 	  \gvac{1}	 \gcn{2}{1}{2}{3} 	\linea		\gnl
		\gvac{1} 			\gmu      \gvac{2} 	\gmu							\gnl
		\gcn{2}{1}{4}{7}			  			\gcn{2}{1}{8}{5} 				\gnl
		\gvac{3} \gmu
		\gend
		(12, \emptyset, \emptyset, 34, 5, 6) 
		=
		\gbeg{4}{5}
		\linea \gmu \linea \gnl
		\linea \gcn{2}{1}{2}{3} \linea \gnl
		\linea \gvac{1} \gmu  \gnl
		\linea \gcn{1}{1}{4}{1} \gnl
		\gmu
		\gend
		(12, 34, 5, 6)
	\end{displaymath}
	where, to simplify the notation in examples, we denote $(\{ 1,2 \}, \emptyset, \emptyset, \{ 3,4 \}, \{ 5 \}, \{ 6 \})$ by $(12, \emptyset, \emptyset, 34, 5, 6)$.
\end{example}

For $n = 0$, we define $\comperm_0 := 1_0$. Furthermore, for any $n \geq 0$ and $t \in \PT_n$, we set $t(\emptyset, \dots, \emptyset) := 1_0$.

\begin{definition}
	The elements of $\comperm := \bigcup_{n\geq 0} \comperm_n$ are referred to as compermutations.
\end{definition}

Compermutations constitute a basis for 
\begin{displaymath}
	\field[\comperm] := \overline{\Mag(E^\tau)}.
\end{displaymath}
Letting $\field[\comperm_n]$ denote the linear span of $\comperm_n$, we have the decomposition $\field[\comperm] = \bigoplus_{n\geq 0} \field[\comperm_n]$. A natural product between two compermutations $t\pi$ and $t'\pi'$ is defined by
\begin{displaymath}
	t\pi \utimes t'\pi' := (t \vee t') (\pi \utimes \pi'),
\end{displaymath}
where $t \vee t'$ denotes the join of $t$ and $t'$.

\begin{example}
	\begin{displaymath}
		\gbeg{2}{1}
		\gmu
		\gend
		(1,2) \utimes 
		\gbeg{2}{1}
		\gmu
		\gend
		(2,13) = 
		\gbeg{4}{3}
		\gmu  \gmu \gnl
		\gcn{1}{1}{2}{3}  \gcn{1}{1}{4}{3} \gnl
		\gvac{1}\gmu
		\gend (1,2,4,35).
	\end{displaymath}
\end{example}

Of greater interest is the coassociative unital bialgebra structure $(\field[\comperm], *, \Delta)$ obtained via the symmetrization $\field[\comperm] = \overline{\Mag(E^\tau)}$. For basic elements $t\pi \in \field[\comperm_m]$ and $t'\pi' \in \field[\comperm_n]$, the product is given by
\begin{equation}\label{eq:multiplication_comperm}
	{t\pi} * {t'\pi'} := \sum_{\gamma \in \Sh_{m,n}} {t \vee t' \gamma (\pi \utimes \pi')} \in \field[\comperm_{m+n}],
\end{equation}
where $\Sh_{m,n}$ denotes the set of $(m,n)$-shuffles in $\bS_{m+n}$. The comultiplication is defined as
\begin{equation}\label{eq:comultiplication_comperm}
	\Delta(t\pi) := \sum_{p= 0}^m t\pi\vert_{\conj{p}} \otimes t \st(\pi\vert_{\conj{m} \setminus \conj{p}}) \in 
	\bigoplus_{p = 0}^m \field[\comperm_p] \otimes \field[\comperm_{m-p}],
\end{equation}
noting that $t\pi\vert_{\conj{0}} = 1_0$. Additionally, an internal product is defined by 
\begin{displaymath}
	t\pi \circ t'\pi' := \left\{ \begin{array}{ll} t'(t,\dots,t) \pi \circ \pi' & \text{if } m = n, \\ 0 & \text{if } m \neq n, \end{array}\right.
\end{displaymath}
where $\pi \circ \pi'$ is the product in $\comp^\#_n$ defined in \eqref{eq:circ_clean}.

\begin{example}
	\begin{align*}
		\Delta\left(
		\gbeg{3}{3}
		\gmu  \linea \gnl 
		\gcn{1}{1}{2}{3} \gvac{1} \linea \gnl
		\gvac{1} \gmu
		\gend		
		(1,2,3) 
		\right)
		&= 
		1_0 \otimes 	\gbeg{3}{3}
		\gmu  \linea \gnl 
		\gcn{1}{1}{2}{3} \gvac{1} \linea \gnl
		\gvac{1} \gmu
		\gend		
		(1,2,3) 
		+
		\gbeg{1}{1}
		\linea 
		\gend
		(1) 
		\otimes 
		\gbeg{2}{1}
		\gmu
		\gend
		(1,2) 
		+
		\gbeg{2}{1}
		\gmu
		\gend
		(1,2) 
		\otimes
		\gbeg{1}{1}
		\linea 
		\gend
		(1) \\
		& + 
		\gbeg{3}{3}
		\gmu  \linea \gnl 
		\gcn{1}{1}{2}{3} \gvac{1} \linea \gnl
		\gvac{1} \gmu
		\gend		
		(1,2,3) \otimes 1_0.
	\end{align*}
\end{example}

\begin{example}
	\begin{displaymath}
		\gbeg{3}{3}
		\gmu  \linea \gnl 
		\gcn{1}{1}{2}{3} \gvac{1} \linea \gnl
		\gvac{1} \gmu
		\gend		
		(1,3,2) \circ 
		\gbeg{2}{1}
		\gmu
		\gend
		(1,23)
		=
		\gbeg{7}{5}
		\gmu				\linea   \gvac{1}   \gmu 				\linea   	\gnl
		\gcn{2}{1}{2}{3} 	\linea 	  \gvac{1}	 \gcn{2}{1}{2}{3} 	\linea		\gnl
		\gvac{1} 			\gmu      \gvac{2} 	\gmu							\gnl
		\gcn{2}{1}{4}{7}			  			\gcn{2}{1}{8}{5} 				\gnl
		\gvac{3} \gmu
		\gend
		(1, \emptyset, \emptyset, \emptyset, 3, 2) 
		= 
		\gbeg{3}{3}
		\linea \gmu \gnl
		\linea \gcn{1}{1}{2}{1} \gnl
		\gmu \gvac{1} 
		\gend
		(1,3,2)
	\end{displaymath}
\end{example}

The central hypothesis of this work—namely, that the absolutely free non-associative algebra $\field\{\bX\} = \prop\Mag \circ \bX$ admits a natural internal associative product, making it a non-associative analogue of Solomon's descent algebra within a non-associative Malvenuto--Reutenauer Hopf algebra—can now be formally established. Recall that the structure of the associative Malvenuto--Reutenauer Hopf algebra $(\field[\bS], *, \Delta, \circ)$ is induced by $\sigma * \sigma':= \sum_{\gamma \in \Sh_{m,n}} \gamma (\sigma \utimes \sigma')$, $\Delta(\sigma) := \sum_{p= 0}^m t\sigma\vert_{\conj{p}} \otimes t \st(\sigma\vert_{\conj{m} \setminus \conj{p}})$  and the composition $\circ$ of permutations.

\begin{proposition}
	The mapping
	\begin{align*}
		\zeta \colon (\field[\comperm], *, \Delta, \circ) &\longrightarrow (\field[\bS], *, \Delta, \circ) \\
		{t\pi} &\mapsto \sigma^{\pi}
	\end{align*}
	is an epimorphism of bialgebras and a homomorphism with respect to the internal product $\circ$.
\end{proposition}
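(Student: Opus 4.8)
The plan is to read $\aso$ as ``forget the tree, then replace a pseudo-composition by its underlying permutation,'' and to verify one structure at a time that each operation on $\field[\comperm\,]$ descends through $\pi\mapsto\sigma^{\pi}$. First I would check that $\aso$ is a well-defined graded surjection. Well-definedness: $\sigma^{\pi}$ equals the underlying permutation of the composition associated to $\pi$, since deleting an empty block leaves the word ``$\pi_1$ in increasing order, then $\pi_2$ in increasing order, $\dots$'' unchanged; and two symbols $t\pi$, $\bar t\bar\pi$ are identified in $\comperm_m$ only when $\pi$ and $\bar\pi$ have the same associated composition, so $\aso$ is constant on equivalence classes and extends linearly. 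Surjectivity is immediate from $\sigma=\sigma^{\pi^{\sigma}}$ with $\pi^{\sigma}=(\{\sigma_1\},\dots,\{\sigma_m\})$ and any tree. Finally, $\aso$ maps $\comperm_m$ into $\gs_m$, hence is graded, so it intertwines the counits, and it sends the unique element of $\comperm_0$ to the empty permutation, so it preserves units.

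The outer product is the substantive step. Expanding both sides,
\begin{displaymath}
\aso\bigl(\F_{t\pi}*\F_{\bar t\bar\pi}\bigr)=\sum_{\gamma\in\Sh(m,n)}\sigma^{\gamma(\pi\cart\bar\pi)}\quad\text{and}\quad\aso(\F_{t\pi})*\aso(\F_{\bar t\bar\pi})=\sum_{\gamma\in\Sh(m,n)}\gamma\bigl(\sigma^{\pi}\times\sigma^{\bar\pi}[m]\bigr),
\end{displaymath}
so it is enough to match the two sums term by term over the common index set $\Sh(m,n)$. Here $\sigma^{\pi\cart\bar\pi}=\sigma^{\pi}\times\sigma^{\bar\pi}[m]$ is immediate from the definitions, and since the action of $\gamma\in\gs_{m+n}$ on a pseudo-composition of type $(1,\dots,1)$ is left composition, the Malvenuto--Reutenauer term $\gamma(\sigma^{\pi}\times\sigma^{\bar\pi}[m])$ is $\gamma\circ\sigma^{\pi\cart\bar\pi}$. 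So everything reduces to the identity $\sigma^{\gamma(\pi\cart\bar\pi)}=\gamma\circ\sigma^{\pi\cart\bar\pi}$ for $\gamma\in\Sh(m,n)$, which I would isolate as a short lemma: every block of $\pi\cart\bar\pi$ is contained in $\{1,\dots,m\}$ or in $\{m+1,\dots,m+n\}$, and a shuffle $\gamma$ is increasing on each of these two intervals, hence preserves the increasing order inside every block; therefore sorting a block and then applying $\gamma$ produces the same word as applying $\gamma$ and then sorting, which is exactly the claimed identity. This order-preservation on blocks is the only genuinely nonobvious point in the argument, and I expect it to be the main obstacle, mostly because it forces one to reconcile the ``action on pseudo-compositions'' and ``action on permutations'' conventions carefully.

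For the comultiplication the input is that restriction and standardization commute with $\pi\mapsto\sigma^{\pi}$: the restriction of a sorted word to the entries lying in an interval of values is again sorted, and subtracting a constant is order-preserving, whence $\sigma^{\pi\vert_{\conj p}}=\sigma^{\pi}\vert_{\conj p}$ and $\sigma^{\st(\pi\vert_{\conj m\setminus\conj p})}=\st(\sigma^{\pi}\vert_{\conj m\setminus\conj p})$. Applying $\aso\otimes\aso$ to $\Delta(\F_{t\pi})$ and comparing with $\Delta(\sigma^{\pi})$ then yields equality of the two sums term by term after renaming the summation index $p$ to $l$.

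For the inner product, note that $t\pi\circ\bar t\bar\pi$ has pseudo-composition part $\pi\circ\bar\pi$ (the grafted tree is forgotten by $\aso$), so $\aso(\F_{t\pi}\circ\F_{\bar t\bar\pi})=\delta_{m,n}\,\sigma^{\pi\circ\bar\pi}$, and by Proposition~\ref{prop:monomial}, which already contains the identity $\sigma^{\pi\circ\bar\pi}=\sigma^{\pi}\circ\sigma^{\bar\pi}$, this coincides with $\aso(\F_{t\pi})\circ\aso(\F_{\bar t\bar\pi})$ computed in $\field[\gs]$ (both vanishing unless $m=n$). Thus, apart from the shuffle lemma of the second paragraph, the whole proof is bookkeeping: each operation on $\field[\comperm\,]$, after discarding trees, becomes the corresponding operation on pseudo-compositions, and compatibility with $\pi\mapsto\sigma^{\pi}$ follows either from Proposition~\ref{prop:monomial} or from the fact that taking underlying permutations commutes with restriction, standardization, and with the action of shuffles on blocked pseudo-compositions.
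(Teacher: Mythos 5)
Your proposal is correct and follows essentially the same route as the paper: the paper's proof consists precisely of the three displayed identities you verify (compatibility of $\pi\mapsto\sigma^{\pi}$ with shuffled concatenation, with restriction/standardization, and with $\circ$ via Proposition~\ref{prop:monomial}), which it declares ``rather obvious.'' You merely supply the justifications the paper omits — in particular the observation that a shuffle is increasing on $\conj{m}$ and on $\{m+1,\dots,m+n\}$ and hence preserves the sorted order inside each block of $\pi\cart\bar\pi$ — and these are all sound.
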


\begin{proof}
	It follows directly that
	\begin{align*}
		\zeta(t\pi * t' \pi') &= \sum_{\gamma \in \Sh_{m,n}} \sigma^{\gamma(\pi \utimes \pi')} = \sum_{\gamma \in \Sh_{m,n}} \gamma(\sigma^\pi \utimes \sigma^{\pi'}) = \zeta(t\pi) * \zeta(t'\pi'),\\
		\Delta(\zeta(t\pi)) &= \sum_{p=0}^m \sigma^\pi\vert_{\conj{p}} \otimes \st(\sigma^\pi\vert_{\conj{m} \setminus \conj{p}}) = (\zeta \otimes \zeta) (\Delta(t\pi)), \quad \text{and} \\
		\zeta(t\pi \circ t'\pi') &= \sigma^{\pi \circ \pi'} = \sigma^\pi \circ \sigma^{\pi'} = \zeta(t\pi) \circ \zeta(t'\pi').
	\end{align*}
\end{proof}

As $I = (i_1,\dots, i_r) \models m$ and $t \in \PT_r$ vary over all possibilities, the elements 
\begin{displaymath}
	D^t_{\leq I} := \sum_{\typ(\pi) = I} t\pi \in \field[\comperm]
\end{displaymath}
span a subspace $\Sol(\comperm) \subseteq \field[\comperm]$ that is closed under $*$, $\Delta$, and $\bullet$, as verified by the following relations:
\begin{align*}
	D^t_{\leq I} * D^{t'}_{\leq I'} &= D^{t \vee t'}_{\leq I \parallel I'}, \quad \Delta(D^\vert_{\leq (m)}) = \sum_{i=0}^m D^\vert_{\leq (i)} \otimes D^\vert_{\leq (m-i)}, \quad \text{and}\\
	D^t_{\leq I} \bullet D^{t'}_{\leq I'} &= \sum_J D^{t(t',\dots, t')}_{\leq J},
\end{align*}
where $J$ ranges over the set  defined in Lemma~\ref{lem:Mackey}, and $\vert = 1_1$. Since $\propMag \circ \bX = \field\{ \bX\}$, Corollary~\ref{cor:main} and formula \eqref{eq:product_DI} give:

\begin{corollary}\label{eq:cor_main_compermutations}
	The following isomorphisms hold:
	\begin{displaymath}
		(\Sigma^{\field\{\bX\}}, *, \Delta) \cong (\field\{\bX\}, *, \Delta) \cong (\Sol(\comperm), *, \Delta).
	\end{displaymath}
	Furthermore, $(\Sol(\comperm), \bullet) \cong (\Sigma^{\field\{\bX\}}, \circ)$.
\end{corollary}

\begin{remark}
	The internal product for pseudo-compositions employed here differs from the product $\wedge$ considered, for instance, by Bidigare \cite{Bid97} (see also \cite{Br00} and \cite[Appendix B]{MR2424338}). For completeness, we recall Bidigare's approach to Solomon's descent algebra. The monoid $\comp_m$ of compositions of $\conj{m}$ is equipped with the product
	\begin{displaymath}
		\pi \wedge \pi' = (\pi_1 \cap \pi'_1, \dots, \pi_1 \cap \pi'_s, \pi_2 \cap \pi'_1, \dots, \pi_r \cap \pi'_s).
	\end{displaymath}
	Let $\sigma(\pi) = (\tau(\pi_1), \dots, \tau(\pi_r))$ denote the natural action of $\bS_m$ on $\comp_m$. The $\mathbb{Z}$-module $\mathbb{Z}[\comp_m^{(1,\dots,1)}]$ spanned by $\comp_m^{(1,\dots,1)} = \{ \pi \in \comp_m \mid \typ(\pi) = (1,\dots,1)\}$ is isomorphic to $\mathbb{Z}[\bS_m]$ via $\pi \mapsto \sigma^{\pi}$, and constitutes a two-sided ideal with respect to $\wedge$. Under this isomorphism, we have $\sigma^{\tau(\pi)} = \tau \sigma^{\pi}$. Since $\tau(\pi \wedge \pi') = \tau(\pi) \wedge \tau(\pi')$, the left multiplication by elements of $\mathcal{B}_m := \{ X \in \mathbb{Z}[\comp_m] \mid \tau(X) = X \text{ for all } \tau \in \bS_m\}$ commutes with the $\bS_m$-action. 
	
	In the isomorphism $\mathbb{Z}[\comp_m^{(1,\dots,1)}] \cong \mathbb{Z}[\bS_m]$, the centralizer of this action is the algebra generated by right multiplication operators by elements of $\bS_m$, which is isomorphic to $(\mathbb{Z}[\bS_m], \bullet)$. The image of $\sum_{\typ(\pi) = I} \pi \in \mathcal{B}_m$ in the centralizer algebra is the right multiplication operator by $\sum_{\typ(\pi) = I} \sigma^{\pi} = \sum_{\Des(\sigma) \leq I} \sigma$. Evaluating these operators at $(1,\dots, m) \in \bS_m$ yields $\sum_{\Des(\sigma) \leq I} \sigma$. Thus, Bidigare's algebra $\mathcal{B}_m$ is isomorphic to $\Sol(\bS_m)$, where coefficients are taken from $\mathbb{Z}$. 
	
	Notably, Bidigare's product satisfies $\pi \wedge \pi = \pi$, a property that the product $\bullet$ lacks. For any $\pi \in \comp_m$ and $\pi' \in \comp^{(1,\dots,1)}_m$, we have:
	\begin{displaymath}
		\tau(\pi \bullet \pi') = (\tau\sigma^{\pi'}(\pi_i) \cap \tau(\pi'_j))_{i,j} = (\sigma^{\tau(\pi')}(\pi_i) \cap \tau(\pi')_j)_{i,j} = \pi \bullet \tau(\pi'),
	\end{displaymath}
	which demonstrates that the left multiplication $\pi' \mapsto \pi \bullet \pi'$ corresponds to the right multiplication operator by $\pi \bullet (\{1\},\dots, \{m\}) = \sigma^{\pi}$. While this is sufficient to show via $\zeta$ that $\Sol(\bS_m)$ is closed under $\bullet$, it does not immediately establish that $\field[\comperm_m]$ is also closed under this product.
\end{remark}

We now turn to the structure of $\field[\comperm]$. The structure of the (associative) Malvenuto--Reutenauer Hopf algebra was determined in \cite{MR1334836} and subsequently studied in greater detail in \cite{MR2103213}.

\subsection{The structure of $(\field[\comperm],*)$}

Given a compermutation $t\pi \in \comperm_m$, if it can be decomposed as $t\pi = t_1 \Pi_1 \utimes t_2 \Pi_2$ for some $t_1 \Pi_1 \in \comperm_i$ and $t_2 \Pi_2 \in \comperm_{m-i}$ with $1 \leq i \leq m-1$, we say that $|\Pi_1|$ is the principal breaking point of $t\pi$. If $t\pi$ admits no principal breaking point, it is called atomic. More generally, if 
\begin{displaymath}
	t\pi = t'(t_1, \dots, t_i \vee t_{i+1}, \dots, t_l) \Pi_1 \utimes \dots \utimes \Pi_i \utimes \Pi_{i+1} \utimes \dots \utimes \Pi_l
\end{displaymath}
where $t' \in \PT_{l-1}$ and $t_j \in \PT_{\longi(\Pi_j)}$ for $j= 1, \dots, l$, we say that $|\Pi_1| + \dots + |\Pi_i|$ is a breaking point of $t\pi$.

\begin{example}
	The compermutation 
	$
	\gbeg{4}{3}
	\gmu  \gmu \gnl
	\gcn{1}{1}{2}{3}  \gcn{1}{1}{4}{3} \gnl
	\gvac{1}\gmu
	\gend (1,2,3,4)
	$ 
	possesses breaking points at $1$, $2$ (principal), and $3$. In contrast, 
	$
	\gbeg{4}{3}
	\gmu  \gmu \gnl
	\gcn{1}{1}{2}{3}  \gcn{1}{1}{4}{3} \gnl
	\gvac{1}\gmu
	\gend (1,4,2,3)
	$ 
	has no breaking points and is, therefore, atomic.
\end{example}

Visually, $t\pi$ can be interpreted as the tree $t$ with its leaves decorated by the components of $\pi$. We say a node of $t$ is compatible (with $\pi$) if every entry in the leaves reachable from its left child is strictly smaller than every entry in the leaves reachable from its right child. For instance, in the compermutation:
\begin{displaymath}
	\gbeg{7}{7}
	\gnl
	\got{1}{{\text{\tiny 1}}} \got{1}{{\text{\tiny 2}}} \got{1}{{\text{\tiny 34}}} \got{2}{\,\,\,{\text{\tiny 6}}} \got{1}{{\text{\tiny 5}}} \got{1}{{\text{\tiny 7}}} \gnl
	\gmu				\linea   \gvac{1}   \gmu 				\linea   	\gnl
	\gcn{2}{1}{2}{3} 	\linea 	  \gvac{1}	 \gcn{2}{1}{2}{3} 	\linea		\gnl
	\gvac{1} 			\gmuc      \gvac{2} 	\gmu							\gnl
	\gcn{2}{1}{4}{7}			  			\gcn{2}{1}{8}{5} 				\gnl
	\gvac{3} \gmu
	\gend
\end{displaymath}
the circled node is compatible because its left child leads to $\{1, 2\}$ and its right child leads to $\{3, 4\}$. In the following figure, all compatible nodes have been marked:
\begin{displaymath}
	\gbeg{7}{7}
	\gnl
	\got{1}{{\text{\tiny 1}}} \got{1}{{\text{\tiny 2}}} \got{1}{{\text{\tiny 34}}} \got{2}{\,\,\,{\text{\tiny 6}}} \got{1}{{\text{\tiny 5}}} \got{1}{{\text{\tiny 7}}} \gnl
	\gmuc				\linea   \gvac{1}   \gmu 				\linea   	\gnl
	\gcn{2}{1}{2}{3} 	\linea 	  \gvac{1}	 \gcn{2}{1}{2}{3} 	\linea		\gnl
	\gvac{1} 			\gmuc      \gvac{2} 	\gmuc							\gnl
	\gcn{2}{1}{4}{7}			  			\gcn{2}{1}{8}{5} 				\gnl
	\gvac{3} \gmuc
	\gend
\end{displaymath}
It is easy to observe that the breaking points of $t\pi$ are determined by those compatible nodes for which the entire path to the root also consists of compatible nodes. Thus, the breaking points in this case are $1, 2, 4$, and $6$.

\begin{example}
	In the compermutation
	$\gbeg{7}{7}
	\gnl
	\got{1}{{\text{\tiny 1}}} \got{1}{{\text{\tiny 2}}} \got{1}{{\text{\tiny 35}}} \got{2}{\,\,\,{\text{\tiny 4}}} \got{1}{{\text{\tiny 6}}} \got{1}{{\text{\tiny 7}}} \gnl
	\gmuc				\linea   \gvac{1}   \gmuc				\linea   	\gnl
	\gcn{2}{1}{2}{3} 	\linea 	  \gvac{1}	 \gcn{2}{1}{2}{3} 	\linea		\gnl
	\gvac{1} 			\gmuc      \gvac{2} 	\gmuc							\gnl
	\gcn{2}{1}{4}{7}			  			\gcn{2}{1}{8}{5} 				\gnl
	\gvac{3} \gmu
	\gend
	$ 
	we find several compatible nodes, yet no breaking points exist because the root itself is not compatible. Consequently, this compermutation is atomic.
\end{example}

Any $t\pi$ can be uniquely factorized as $t\pi = t'(t_1, \dots, t_l) \Pi_1 \utimes \dots \utimes \Pi_l$, where each $t_j \Pi_j$ is atomic. We define a partial order on $\comperm_n$ by declaring $t'\pi' < t\pi$ if the set of breaking points of $t'\pi'$ is strictly contained in the set of breaking points of $t\pi$. From the unique factorization above, it is clear that minimal elements under this order are necessarily atomic.

\begin{theorem}\label{thm:classification_*}
	$(\field[\comperm], *)$ is a non-associative unital algebra freely generated by the set $\{ t\pi \in \comperm \mid t\pi \text{ is atomic}\}$.
\end{theorem}

\begin{proof}
	Consider a monomial $w^*(t_1\Pi_1, \dots, t_l \Pi_l)$ in $(\field[\comperm], *)$, where $t_1 \Pi_1, \dots, t_l \Pi_l$ are atomic and $w \in \PT_l$ is an element of the non-symmetric magmatic operad. The operation $w^*$ is the $l$-ary operation induced by $w$ on the algebra $(\field[\comperm], *)$. 
	
	Let $t\pi = w(t_1, \dots, t_l) \gamma (\Pi_1 \utimes \dots \utimes \Pi_l)$, with $\gamma \in \Sh_{|\Pi_1|, \dots, |\Pi_l|}$, be a summand in the expansion of $w^*(t_1 \Pi_1, \dots, t_l \Pi_l)$, and let $p$ be a breaking point of $t\pi$. If $p \notin \{ |\Pi_1|, |\Pi_1| + |\Pi_2|, \dots, |\Pi_1| + \dots + |\Pi_{l-1}| \}$, let $i$ be the smallest index such that $p < |\Pi_1| + \dots + |\Pi_i|$. Then the node in $w(t_1, \dots, t_l)$ corresponding to this breaking point must belong to the sub-tree $t_i$. Since the path of compatible nodes from this node to the root of $t\pi$ must consist of compatible nodes within $t_i$ (including the root of $t_i$), this would imply that $t_i \Pi_i$ has a breaking point, contradicting the assumption that $t_i \Pi_i$ is atomic. 
	
	Therefore, all breaking points of $t\pi$ must belong to $\{ |\Pi_1|, |\Pi_1| + |\Pi_2|, \dots, |\Pi_1| + \dots + |\Pi_{l-1}| \}$. This implies that for any summand $t\pi$ in the expression of the product $w^*(t_1 \Pi_1, \dots, t_l \Pi_l)$, we have $t\pi \leq w(t_1, \dots, t_l) \Pi_1 \utimes \dots \utimes \Pi_l$. The result then follows by a standard inductive argument.
\end{proof}

\subsection{The structure of $(\field[\comperm], \Delta)$}

We consider $\field[\comperm]$ equipped with the dual operations $*'$ and $\Delta'$ of $*$ and $\Delta$, respectively, defined by
\begin{displaymath}
	t\pi *' t'\pi' := \sum_{\substack{t'' \pi''\vert_{\conj{m}} = t\pi \\t''\st(\pi''\vert_{\conj{m+n} \setminus \conj{m}}) = t'\pi'}} t'' \pi''
\end{displaymath}
and
\begin{displaymath}
	\Delta'(t\pi) := \sum_{t = t' \vee t''} t'\st(\pi_1,\dots, \pi_r) \otimes t'' \st(\pi_{r+1},\dots, \pi_{r+s}).
\end{displaymath}

For any compermutation $t\pi \in \comperm$, there exists a unique factorization $t\pi = t_l \Pi_l \utimes (\dots \utimes (t_2 \Pi_2 \utimes t_1 \Pi_1))$ with the maximum possible number of factors. We say that $t\pi$ is connected if this decomposition involves an odd number of factors; otherwise, we say that $t\pi$ is not connected. It is clear that either $t\pi$ is connected or it can be written as $t\pi = t' \pi' \utimes t''\pi''$ with $t'' \pi''$ connected. Consequently, we obtain a factorization $t\pi = ((t_1 \Pi_1 \utimes t_2 \Pi_2) \utimes \dots) \utimes t_l \Pi_l$ where each $t_i \Pi_i$ is connected. 

In fact, this decomposition is unique. Suppose we have two different factorizations into connected factors:
\begin{displaymath}
	t\pi = ((t_1 \Pi_1 \utimes t_2 \Pi_2) \utimes \dots)\utimes t_l \Pi_l = ((t'_1 \Pi'_1 \utimes t'_2 \Pi'_2) \utimes \dots) \utimes t'_{l'}\Pi'_{l'}.
\end{displaymath}
If the right-hand side involves only one factor, say $t'\pi'$, then $t\pi$ is connected. However, if $t_l \Pi_l$ is also connected, then $t\pi = ((t_1 \Pi_1 \utimes t_2 \Pi_2) \utimes \dots)\utimes t_l \Pi_l$ is not connected, which is a contradiction. Thus, both sides must involve at least two factors, implying $t_l \Pi_l = t'_{l'} \Pi'_{l'}$. By iterating this argument, we establish uniqueness. Based on this factorization into connected compermutations, we define the connected type of $t\pi$ as $\ctyp(t\pi) = (\vert \Pi_1 \vert, \dots, \vert \Pi_l \vert)$.

The following result relies on the uniqueness of the previous factorization and a suitable partial order. For $t\pi, t'\pi' \in \comperm_n$, we say that $t'\pi' < t\pi$ if:
\begin{itemize}
	\item[i)] the set of breaking points of $t'\pi'$ is strictly contained in that of $t\pi$, or
	\item[ii)] $t'\pi'$ and $t\pi$ share the same breaking points and, reading from right to left, the first component in which $\ctyp(t'\pi')$ and $\ctyp(t\pi)$ differ is greater in $\ctyp(t'\pi')$. 
\end{itemize}
While thinking in terms of compatible nodes is useful for the following proof, the reader should note that the compermutations $t''\pi''$ appearing in the expansion of $t\pi *' t'\pi'$ do not necessarily share the same trees or the same type of pseudo-compositions.

\begin{theorem}\label{thm:structure_dual}
	$(\field[\comperm], *')$ is a unital associative algebra freely generated by the set $\{t\pi \in \comperm \mid t \pi \text{ is connected}\}$. 
\end{theorem}

\begin{proof}
	We examine the summands in the expression $t_1 \Pi_1 *' t_2 \Pi_2 *' \dots *' t_l \Pi_l$ for connected factors. Our goal is to compare them with $((t_1 \Pi_1 \utimes t_2 \Pi_2) \utimes \dots) \utimes t_l \Pi_l$. Defining the sets $S_1= \{ 1,\dots, \vert \Pi_1 \vert \}, \dots, S_l = \{ \vert \Pi_1 \vert + \dots + \vert \Pi_{l-1} \vert + 1, \dots, n \}$, the summands under consideration are those $t\pi \in \comperm_n$ such that $t\st(\pi\vert_{S_j}) = t_j \Pi_j$ for $j=1,\dots, l$.
	
	Let $\pi = (\pi_1,\dots, \pi_r)$ and assume $a$ is a breaking point of $t\pi$ not belonging to $\{ \vert \Pi_1 \vert, \vert \Pi_1 \vert + \vert \Pi_2 \vert, \dots \}$. Let $j \geq 1$ be the minimum index such that $a < \vert \Pi_1 \vert + \dots + \vert \Pi_j \vert$. Then $a = \vert \Pi_1 \vert + \dots + \vert \Pi_{j-1} \vert + b$ and $\vert \Pi_j \vert = b + c$ with $b, c \geq 1$. Since $t\st(\pi\vert_{S_j}) = t_j \Pi_j$, it follows that $b$ is a breaking point of $t_j \Pi_j$ and $a$ is a breaking point of $((t_1 \Pi_1 \utimes t_2 \Pi_2) \utimes \dots) \utimes t_l \Pi_l$. This shows that either $t\pi < ((t_1 \Pi_1 \utimes t_2 \Pi_2) \utimes \dots) \utimes t_l \Pi_l$ or they share the same breaking points. In the latter case, $\{ \vert \Pi_1 \vert, \vert \Pi_1 \vert + \vert \Pi_2 \vert, \dots \}$ are breaking points of $t\pi$, and since $t \st(\pi\vert_{S_j}) = t_j \Pi_j$, we have $\pi = \Pi_1 \utimes \Pi_2 \utimes \dots \utimes \Pi_l$.
	
	Now, consider the factorization $t\pi = ((t'_1 \Pi'_1 \utimes t'_2 \Pi'_2) \utimes \dots ) \utimes t'_l \Pi'_l$ in connected compermutations:
	\begin{enumerate}
		\item If $\vert \Pi'_{l'} \vert > \vert \Pi_l \vert$, then $t\pi < ((t_1 \Pi_1 \utimes t_2 \Pi_2) \utimes \dots ) \utimes t_l \Pi_l$. 
		\item If $\vert \Pi'_{l'} \vert < \vert \Pi_l \vert$, then by restriction to $S_l$, $t_l \Pi_l$ would take the form $t'\Pi' \utimes t'_l \Pi'_{l'}$, which is impossible since both $t'_{l'} \Pi'_{l'}$ and $t_l \Pi_l$ are connected.
		\item If $\vert \Pi_l \vert = \vert \Pi'_{l'} \vert$, then by restriction to $S_l$, $t_l \Pi_l = t'_{l'} \Pi'_{l'}$. Iterating this shows that either $t\pi < ((t_1 \Pi_1 \utimes t_2 \Pi_2) \utimes \dots ) \utimes t_l \Pi_l$ or they are identical.
	\end{enumerate}
	Thus, the summands in $((t_1 \Pi_1 *' t_2 \Pi_2)*' \dots )*' t_l \Pi_l$ are either the compermutation $((t_1 \Pi_1 \utimes t_2 \Pi_2) \utimes \dots) \utimes t_l \Pi_l$ or compermutations strictly lower with respect to $<$. The statement follows.
\end{proof}

A graded coassociative coalgebra $C = \bigoplus_{n \geq 0} C_n$ is said to be cofree if for any graded coassociative coalgebra $D$ and any linear map $f \colon D \longrightarrow C_1$ (where $f(D_i) = 0$ for $i \neq 1$), there exists a unique morphism of graded coalgebras $\hat{f} \colon D \longrightarrow C$ such that $p_1 \hat{f} = f$, where $p_1$ is the canonical projection onto $C_1$. By dualizing Theorem~\ref{thm:structure_dual}, we obtain:

\begin{corollary}\label{cor:classification_Delta}
	$(\field[\comperm], \Delta, \epsilon)$ is cofree as a graded coassociative coalgebra.
\end{corollary}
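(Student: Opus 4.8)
The plan is to deduce the corollary from Theorem~\ref{thm:cofree} by graded duality, transposing the universal property of a free associative algebra into the universal property of a cofree coassociative coalgebra.

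First I would settle the bookkeeping. Each $\comperm_m$ is finite --- there are finitely many planar binary rooted trees with a prescribed number of leaves and finitely many pseudo-compositions of $\conj m$ --- so $\field[\comperm\,]=\bigoplus_{m\ge 0}\field[\comperm\,]_m$ has finite-dimensional homogeneous components, $\field[\comperm\,]'=\bigoplus_{m\ge 0}\field[\comperm\,]_m^{*}$ is its graded dual, and symmetrically $\field[\comperm\,]$ is the graded dual of $\field[\comperm\,]'$. Under the pairing $\langle\G_{t\pi},\F_{\bar t\bar\pi}\rangle=\delta_{t\pi,\bar t\bar\pi}$ the coproduct $\Delta$ is the transpose of the product $*'$ and $\epsilon$ is the transpose of the unit of $\field[\comperm\,]'$; these are exactly the formulas for $*'$ and $\Delta'$ displayed at the opening of this section. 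By Theorem~\ref{thm:cofree}, $(\field[\comperm\,]',*')$ is the free graded associative algebra on the set $\{\G_{t\pi}\mid t\pi\ \text{connected}\}$, with $\G_{t\pi}$ in degree $\vert\pi\vert$. Hence $\field[\comperm\,]$ is, as a graded coassociative coalgebra, the graded dual of a free graded associative algebra.

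Then I would run the transposition. Given a graded coassociative coalgebra $D=\bigoplus_{n\ge 0}D_n$, its graded dual $D'$ is a graded associative algebra (the transpose of the coassociative $\Delta_D$ is an associative product), and the datum $f$ defining a morphism into $\field[\comperm\,]$ transposes to a graded linear map from the free generators $\{\G_{t\pi}\mid t\pi\ \text{connected}\}$ into $D'$; by the freeness supplied by Theorem~\ref{thm:cofree} this extends to a unique homomorphism of graded algebras $\psi\colon\field[\comperm\,]'\to D'$. Transposing $\psi$ and restricting along $D\hookrightarrow (D')^{*}$ gives a graded linear map $\hat f\colon D\to\field[\comperm\,]$ which, being the transpose of an algebra homomorphism, is a morphism of graded coalgebras and realizes $f$; its uniqueness is the transpose of the uniqueness of $\psi$. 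If some $D_n$ is infinite-dimensional one first writes $D$ as the directed union of its finite-dimensional graded subcoalgebras (the graded fundamental theorem of coalgebras) and passes to the colimit, or equivalently one checks that $\hat f$ is forced degree by degree by the formulas for $\Delta$ and $*'$.

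The only delicate point --- and the one I expect to demand care rather than a new idea --- is making the duality dictionary precise: matching ``morphism of graded coalgebras out of $D$'' with ``homomorphism of graded algebras into $D'$'', and matching the cogenerators of $\field[\comperm\,]$, together with the projection entering the definition of cofreeness, with the free generators $\{\G_{t\pi}\mid t\pi\ \text{connected}\}$ of Theorem~\ref{thm:cofree}. Since that theorem already exhibits the free structure explicitly on a basis and all the homogeneous components in sight are finite-dimensional, the remaining verifications are routine.
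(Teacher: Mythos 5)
Your proof is correct and is precisely the argument the paper intends: the paper's entire proof of this corollary is the single sentence ``By dualizing Theorem~\ref{thm:cofree} we get \dots'', and you have merely supplied the finite-dimensionality of the homogeneous components and the transposition of the universal property that make that dualization rigorous. The one point you rightly flag as delicate --- reconciling the paper's stated definition of cofreeness, whose test map lands in $C_1$, with the fact that the cogenerators (the duals of the connected compermutations) occur in every degree --- is a wrinkle in the paper's own definition rather than a gap in your argument.
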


\subsection{The structure of $(\field[\comperm_m],\circ)$}
In this part, we discuss the structure of the associative algebra $(\field[\comperm_m], \circ)$. The results presented here, and in particular Theorem~\ref{thm:structure_inner}, are closely related to general phenomena in the theory of monoid algebras and may be familiar to readers with experience in related combinatorial or geometric settings, such as face algebras. Since we were not able to locate a reference where this specific formulation is treated explicitly, we include both a proof based on general monoid‑theoretic arguments \cite{MR3525092} and a direct elementary proof.

Recall that, viewing binary rooted planar trees as a nonsymmetric magmatic operad, we have
\begin{displaymath}
	t\pi \circ t' \pi' = t'(t,\dots, t) \pi \circ \pi',
\end{displaymath}
where $\pi \circ \pi'$ consists of the subsets in the matrix
\begin{equation}\label{eq:main_matrix}
	\left(
	\begin{array}{ccc}
		\pi(\pi'_1) \cap \pi_1 & \dots &  \pi(\pi'_1)  \cap \pi_r  \\
		\vdots && \vdots \\
		\pi(\pi'_s) \cap \pi_1 & \dots &  \pi(\pi'_s)  \cap \pi_r  \\
	\end{array}
	\right)
\end{equation}
read row by row.

\begin{proposition}\label{prop:quotient_compermutations_to_compositions}
	The kernel of the homomorphism
	\begin{eqnarray*}
		\zeta \colon (\field[\comperm_m],\circ) &\longrightarrow& (\field[\comp_m],\circ)\\
		t\pi & \mapsto & \pi
	\end{eqnarray*}
	is a nilpotent ideal.
\end{proposition}

\begin{proof}
	$\ker \zeta$ is linearly spanned by $K = \{t \pi - t' \pi \mid t\pi, t'\pi \in \comperm_m\}$. If $\longi(\pi \circ \pi') \leq \longi(\pi')$, then $t\pi \circ t' \pi' = t' \pi \circ \pi'$ since each row of the matrix (\ref{eq:main_matrix}) contains only one non-empty entry. Thus, the product of $m$ elements of $K$ either vanishes or results in a linear combination of elements $t'\pi'$ with $\pi'$ having length $\geq m+1$, which is impossible.
\end{proof}

\begin{example} 
	Since the underlying permutation of $\pi = (3,12)$ is $\sigma^{\pi} = (3,1,2)$, then $(3,12) \circ (23,1)$ is obtained from
	\begin{align*}
		(\pi(\{2,3\}) &\cap \{3\}, \pi(\{2,3\}) \cap \{1,2\}, \pi(\{1\}) \cap \{3\}, \pi(\{1\}) \cap \{1,2\})\\
		& = (\{1,2\} \cap \{3\}, \{1,2\} \cap \{1,2\}, \{3\} \cap \{3\}, \{3\}  \cap \{1,2\})\\
		& = (\emptyset, \{1,2\}, \{3\},\emptyset)
	\end{align*}
	after removing empty components; i.e., $(3,12) \circ (23,1) = (12,3)$. Similarly, defining $\langle\pi\rangle := \pi - \sigma^\pi$ for any $\pi \in \comp_3$, we obtain the following multiplication table, which corresponds to the 2 by 2 matrix algebra:
	\begin{displaymath}
		\begin{array}{c|cccc}
			\circ	 & \langle (12,3)\rangle 	& \langle(1,23)\rangle 	& \langle(3,12)\rangle 	& \langle(23,1)\rangle 	\cr
			\hline
			\langle(12,3)\rangle & \langle(12,3)\rangle 	& 0 		& \langle(3,12)\rangle  & 0 		\cr
			\langle(1,23)\rangle & 0			& \langle(1,23)\rangle	& 0 		& \langle(23,1)\rangle	\cr
			\langle(3,12)\rangle & 0 			& \langle(3,12)\rangle	& 0			& \langle(12,3)\rangle	\cr
			\langle(23,1)\rangle & \langle(23,1)\rangle		& 0 		& \langle(1,23)\rangle	& 0
		\end{array}
	\end{displaymath}
\end{example}

We now determine the structure of $\field[\comp_m]$. An interval of $\conj{m}$ is a subset of the form $a, a+1, \dots, a+l$. We say that $\pi$ consists of intervals if all its components are intervals. Consider the set
\begin{displaymath}
	\Int_m:= \{\pi \in \comp_m \mid \pi \text{ consists of intervals} \},
\end{displaymath}
and the subspace $\field[\Int_m]$ spanned by $\Int_m$. For any $\pi_i \subseteq \conj{m}$, there is a unique partition into maximal intervals $(\pi_{i,1},\dots,\pi_{i,k})$. We define the map
\begin{eqnarray*}
	\eta \colon (\field[\comp_m],\circ) & \longrightarrow & (\field[\Int_m],\circ) \\
	\pi  & \mapsto &  (\pi_{1,1},\pi_{1,2},\dots, \pi_{2,1},\pi_{2,2},\dots).
\end{eqnarray*}
For a composition $I = (i_1,\dots,i_r)$ of $m$, let $\Stb(I)$ be its stabilizer in $\bS_r$ under the right action $I\sigma = (i_{\sigma(1)}, \dots, i_{\sigma(r)})$, and $\field[\Stb(I)]$ its group algebra. We will also recall right actions on pseudo-compositions $\pi\sigma = (\pi_{\sigma(1)}, \dots, \pi_{\sigma(r)})$.

\begin{theorem}\label{thm:structure_inner}
	The map $\eta$ is an algebra homomorphism, and $\ker\eta = \spann\langle \pi - \eta(\pi) \mid \pi \in \comp_m \rangle$ is a nilpotent ideal of $\field[\comp_m]$. Furthermore, if the characteristic of $\field$ does not divide $\vert \Stb(I) \vert$ for any partition $I$ of $m$, then $(\field[\Int_m], \circ)$ is semisimple, $\Nil(\field[\comp_m]) = \ker \eta$, and there exists a direct sum decomposition:
	\begin{displaymath}
		\field[\comp_m] = \field[\Int_m] \oplus \Nil(\field[\comp_m]).
	\end{displaymath}
	Moreover, the following isomorphism holds:
	\begin{displaymath}
		\field[\Int_m] \cong \bigoplus_{I} M_{\vert\bS_r: \Stb(I)\vert}(\field[\Stb(I)]),
	\end{displaymath}
	where $I=(i_1, \dots, i_r)$, with $i_1 \geq i_2 \geq \dots \geq i_r$, ranges over all partitions of $m$.
\end{theorem}

\begin{proof}
	We first verify that $\field[\Int_m]$ is a subalgebra of $\field[\comp_m]$. Let $\pi, \pi' \in \Int_m$. Given elements $a, b \in \pi(\pi'_i) \cap \pi_j$ such that $a < b$, it follows that the consecutive integers $a = \pi(a'), a+1 = \pi(a'+1), \dots, b = \pi(b')$ are contained in $\pi_j$. Since $\pi'_i$ is an interval, we must have $a', a'+1, \dots, b' \in \pi'_i$, which confirms that $\pi(\pi'_i) \cap \pi_j$ is indeed an interval.
	
	To show that $\eta$ is an algebra homomorphism, note that $\eta(\pi)$ is uniquely determined by the maximal intervals of $\pi$ serving as its components, while preserving the underlying permutation $\sigma^{\eta(\pi)} = \sigma^{\pi}$. Thus, proving $\eta(\pi \circ \pi') = \eta(\pi) \circ \eta(\pi')$ reduces to showing that the maximal intervals of $\pi \circ \pi'$ are precisely the intersections $\pi(S') \cap S$, where $S$ and $S'$ are maximal intervals of $\pi$ and $\pi'$, respectively. If a set $\{a_1, a_2, \dots\}$ is contained in $\pi(\pi'_i) \cap \pi_j$, there exist maximal intervals $S \subseteq \pi$ and $S' \subseteq \pi'$ such that $\{a_1, a_2, \dots\} \subseteq \pi(S') \cap S$. Since this intersection is an interval, the assertion follows.
	
	The nilpotency of $\ker \eta = \spann\langle \pi - \pi' \mid \eta(\pi) = \eta(\pi') \rangle$ is established by examining the matrix (\ref{eq:main_matrix}). We observe that $\longi(\pi \circ \pi') \leq \longi(\pi)$ if and only if the only non-empty entry in the $j$-th column is $\pi_j$. Since $(\sigma^{\pi})^{-1}(\pi_i)$ is an interval, the same holds for the matrix associated with $\pi \circ \eta(\pi')$; in fact, removing empty components yields $\pi \circ \pi' = \pi \circ \eta(\pi')$. Consequently, if $\longi(\pi \circ \pi') \leq \longi(\pi)$, then $\pi \circ (\pi' - \eta(\pi')) = 0$. This condition ensures that any product of at least $m$ factors in $\ker \eta$ must vanish.
	
	Once the semisimplicity of $\field[\Int_m]$ is established, the equality $\Nil(\field[\comp_m]) = \ker \eta$ and the stated direct sum decomposition follow immediately. The semisimplicity of $\field[\Int_m]$ is a consequence of its structure as a direct sum of matrix algebras, as detailed below. We note that the restriction on the characteristic of $\field$ is required to ensure that each group algebra $\field[\Stb(I)]$ is semisimple.
	
	We adopt the notation:
	\begin{displaymath}
		\b1_I= (\{1,2,\dots, i_1\}, \dots, \{i_1 + \dots + i_{r-1} + 1, \dots, m\}), 
	\end{displaymath}
	for $I = (i_1,\dots, i_r) \models m$. It follows from (\ref{eq:main_matrix}) that the subspaces
	\begin{displaymath}
		S_{\geq r} := \spann\langle \pi \in \Int_m \mid \longi(\pi) \geq r \}
	\end{displaymath}
	form a filtration of ideals of $S:=\field[\Int_m]$. Let $[\pi]$ denote the equivalence class $\pi + S_{\geq r+1}$.
	
	Now, consider the quotient algebras $S_{\geq r} / S_{\geq r+1}$. For each partition $I$ of $m$ with length $r$, define:
	\begin{displaymath}
		S_I / S_{\geq r+1} := \spann \langle [\b1_{I\sigma} \tau] \mid \sigma, \tau \in \bS_r \rangle.
	\end{displaymath}
	This space is spanned by classes $[\pi]$ where $\pi$ consists of intervals whose lengths lie in the orbit of $I$ under the $\bS_r$-action. It is clear that $S_I / S_{\geq r+1}$ is an ideal of $S_{\geq r} / S_{\geq r+1}$, leading to the decomposition:
	\begin{displaymath}
		S_{\geq r} / S_{\geq r+1} = \bigoplus_{I \vdash m, \longi(I) = r} S_I / S_{\geq r+1}.
	\end{displaymath}
	
	Finally, the algebra $(S_I / S_{\geq r+1}, \circ)$ can be identified with the tensor product $\spann \langle J \mid J \in \Orb(I) \rangle \otimes \field[\bS_r]$ equipped with the product:
	\begin{displaymath}
		(J \otimes \tau)(J' \otimes \tau') = \left\{ \begin{array}{ll} J \otimes \tau\tau' & \text{ if } J\tau = J' \\ 0 & \text{otherwise} \end{array}\right.
	\end{displaymath}
	By considering the left cosets $\sigma_k \Stb(I)$ of $\Stb(I)$ in $\bS_r$ and comparing dimensions, we obtain:
	\begin{displaymath}
	\spann \langle J \mid J \in \Orb(I) \rangle  \otimes \field[\bS_r] = \bigoplus_{i,j} I \sigma^{-1}_i \otimes \sigma_i \field[\Stb(I)] \sigma^{-1}_j.
	\end{displaymath}
	Since
	\begin{displaymath}
		(I \sigma^{-1}_i \otimes \sigma_i b \sigma_j^{-1})(I \sigma^{-1}_k \otimes \sigma_k b' \sigma_k^{-1}) = \delta_{j,k} I \sigma^{-1}_i \otimes \sigma_i bb' \sigma_l^{-1},
	\end{displaymath}
	we conclude that $$\spann \langle J \mid J \in \Orb(I) \rangle  \otimes \field[\bS_r] \cong M_{\vert \bS_r : \Stb(I)\vert}(\field[\Stb(I)]).$$
\end{proof}

\begin{example}
	The partitions of $m = 3$ are $(3)$, $(2,1)$, and $(1,1,1)$. It is straightforward to see that $\Stb_{\bS_1}((3)) = \{(1)\}$, $\Stb_{\bS_2}((2,1)) = \{(1,2)\}$, and $\Stb_{\bS_3}((1,1,1)) = \bS_3$, with indices $1$, $2$, and $1$, respectively. Consequently, we obtain the isomorphism:
	\begin{displaymath}
		\field[\comperm_3]/\Nil(\field[\comperm_3]) \cong \field \oplus M_2(\field) \oplus \field[\bS_3].
	\end{displaymath}
	The kernel of the projection $\eta$ is spanned by the set $\{(13,2) - (1,3,2), (2,13) - (2,1,3)\}$. One may readily verify that this is a nilpotent ideal:
	\begin{align*}
		((13,2) - (1,3,2)) \circ ((13,2) - (1,3,2)) &\\
		&\hskip -3cm = (1,2,3) - (1,2,3) - (1,2,3) + (1,2,3) = 0, \\
		((13,2) - (1,3,2)) \circ ((2,13) - (2,1,3)) &= 0, \\
		((2,13) - (2,1,3)) \circ ((13,2) - (1,3,2)) &= 0, \\
		((2,13) - (2,1,3)) \circ ((2,13) - (2,1,3)) &= 0.
	\end{align*}
	While the radical of $\field[\comp_3]$ has a nilpotency index of $2$ due to its low dimension, this property does not hold in general. For instance, in $\field[\comp_4]$ we observe:
	\begin{align*}
		((134,2) - (1,34,2)) \circ ((13,24) - (1,3,2,4)) = (14,3,2) - (1,4,3,2) \neq 0.
	\end{align*}
\end{example}

\subsubsection{A proof via the structure theory of monoid algebras}
In this part, we will freely use the notation and definitions in \cite{MR3525092}. The operation $\circ$ admits an alternative interpretation. Consider the symmetric group $\bS_m$ and the lattice $(\Partitions_m, \wedge)$ of partitions of $\conj{m}$, equipped with the natural $\bS_m$-action $P^{\tau} = \tau^{-1}(P)$. For $P = \{P_1, \dots, P_r\}$ and $Q = \{Q_1, \dots, Q_s\}$, the meet $P \wedge Q$ is defined as $\{ P_i \cap Q_j \mid i = 1, \dots, r, \, j = 1, \dots, s \}$, where empty subsets are discarded. We define the semidirect product $\bS_m \ltimes \Partitions_m$ with the multiplication:
\begin{displaymath}
	(\sigma P)(\tau Q) := \sigma\tau P^{\tau} \wedge Q.
\end{displaymath}
To each composition $\pi = (\pi_1, \dots, \pi_r)$, we associate two partitions: $P_\pi = \{ \pi_1, \dots, \pi_r\}$, obtained by neglecting the order of components, and $\b1_\pi$, obtained by partitioning $\conj{m}$ into $r$ consecutive intervals of lengths $|\pi_1|, |\pi_2|, \dots, |\pi_r|$. Specifically, $\b1_\pi = \Sh(\pi)(P_\pi)$. Under this notation, we have:
\begin{align*}
	(\sigma^\pi \b1_\pi)(\sigma^{\pi'} \b1_{\pi'}) &= \sigma^\pi \sigma^{\pi'} (\Sh(\pi')(\b1_\pi) \wedge \b1_{\pi'}) = \sigma^{\pi \circ \pi'}(\Sh(\pi') \Sh(\pi)(P_\pi) \wedge \b1_{\pi'}) \\
	&= \sigma^{\pi \circ \pi'}(\Sh(\pi')\Sh(\pi)(P_\pi \wedge P_{\pi(\pi')})) = \sigma^{\pi \circ \pi'} (\Sh(\pi') \Sh(\pi)(P_{\pi \circ \pi'})) \\
	&= \sigma^{\pi \circ \pi'} \b1_{\pi \circ \pi'}.
\end{align*}
Assuming as before that empty components in $\pi \circ \pi'$ are removed, it follows that $(\comp_m, \circ)$ is isomorphic to a submonoid of $\bS_m \ltimes \Partitions_m$. The latter is an inverse monoid where the star operation is given by $(\sigma P)^* := \sigma^{-1} \sigma(P)$, and the idempotents are of the form $(1, \dots, m) P$, which we identify with $P \in \Partitions_m$. The minimal ideal of $\bS_m \ltimes \Partitions_m$ is the set $\{\sigma \{\{1\}, \dots, \{m\}\} \mid \sigma \in \bS_m\}$, which constitutes a group isomorphic to $\bS_m$. 

\begin{lemma}
	Let $\comp'_m$ be the set of elements $\sigma P \in \bS_m \ltimes \Partitions_m$ such that $\sigma|_{P_i}$ is increasing and $P_i$ is an interval for each $P_i \in P$. Then $\comp'_m$ is a submonoid of $\bS_m \ltimes \Partitions_m$ isomorphic to $\comp_m$.
\end{lemma}

\begin{proof}
	We first demonstrate that $\comp'_m$ is closed under the monoid operation. Let $\sigma P, \tau Q \in \comp'_m$. Suppose $\tau^{-1}(a), \tau^{-1}(b) \in \tau^{-1}(P_i) \cap Q_j$ with $\tau^{-1}(a) < \tau^{-1}(a) + 1 \leq \tau^{-1}(b)$. Since $\tau|_{Q_j}$ is increasing, it follows that $a, b \in P_i$ and $a < \tau(\tau^{-1}(a) + 1) \leq b$. Given that $P_i$ and $Q_j$ are intervals, we observe that $\tau(\tau^{-1}(a) + 1) \in P_i$ and $\tau^{-1}(a) + 1 \in \tau^{-1}(P_i) \cap Q_j$, thereby proving that $\tau^{-1}(P_i) \cap Q_j$ is an interval. Furthermore, the monotonicity of $\sigma|_{P_i}$ implies $\sigma(a) = \sigma\tau(\tau^{-1}(a)) < \sigma\tau(\tau^{-1}(b))$. Consequently, $\sigma\tau$ is increasing when restricted to any non-empty subset in $P^{\tau} \wedge Q$, which confirms that $(\sigma P) (\tau Q) \in \comp'_m$.
	
	Finally, it is straightforward to verify that the map
	\begin{align}
		\comp_m & \longrightarrow \comp'_m \nonumber\\
		\pi & \mapsto \sigma^{\pi} (\sigma^{\pi})^{-1}(\{\pi_1, \dots, \pi_r\}) \label{eq:isomorphism_monoids_compositions}
	\end{align}
	establishes an isomorphism of monoids.
\end{proof}

Note that $\comp'_m$ is not necessarily closed under the operation $\sigma P \mapsto (\sigma P)^*$, as $\sigma(P)$ may not consist of intervals. It is therefore natural to consider its (von Neumann) regular elements. Since $\comp'_m$ is a monoid with commuting idempotents, the regular elements constitute a submonoid (cf. \cite{MR3525092}, p. 34, Exercise 3.4).

\begin{lemma}
	An element $\sigma P \in \comp'_m$ is regular if and only if $\sigma(P)$ consists of intervals.
\end{lemma}

\begin{proof}
	The element $\sigma P$ is regular if there exists $\tau Q$ such that $\sigma P = \sigma P \tau Q \sigma P$, which is equivalent to requiring $\tau = \sigma^{-1}$ and $\sigma(P) = \sigma(P) \wedge Q$. In this setting, let $\sigma(a) < \sigma(a) + 1 \leq \sigma(b) \in \sigma(P_i) = \sigma(P_i) \cap Q_j$. Since $\sigma^{-1}|_{Q_j}$ is increasing and $P_i$ is an interval, we have $a < \sigma^{-1}(\sigma(a) + 1) \leq b$. As $a$ and $b$ both belong to the interval $P_i$, it follows that $\sigma^{-1}(\sigma(a) + 1) \in P_i$, which implies $\sigma(a) + 1 \in \sigma(P_i)$. Thus, $\sigma(P)$ consists of intervals. Conversely, if $\sigma P \in \comp'_m$ and $\sigma(P)$ consists of intervals, then $\sigma^{-1}\sigma(P) \in \comp'_m$ and $\sigma P = \sigma P \sigma^{-1}\sigma(P) \sigma P$, proving that $\sigma P$ is regular.
\end{proof}

Let $\Int'_m$ denote the submonoid of regular elements in $\comp'_m$. Under the isomorphism \eqref{eq:isomorphism_monoids_compositions}, $\Int'_m$ is identified with $\Int_m$.

\begin{lemma}
	The following properties hold in $\Int'_m$:
	\begin{enumerate}
		\item Two idempotents $P, Q \in \Int'_m$ are $\mathscr{J}$-equivalent if and only if they have the same multiset of block sizes, that is,
		$\{|P_1|,\dots,|P_{\longi(P)}|\}
		=
		\{|Q_1|,\dots,|Q_{\longi(Q)}|\}$.
		\item If $P = \{P_1, \dots, P_r\}$, then $|E(J_{P})| = |\bS_r : \Stb(I)|$, where $I$ is the partition associated with $P$.
		\item $G_{P} \cong \Stb(I)$.
	\end{enumerate}
\end{lemma}

\begin{proof}
	Two idempotents $P$ and $Q$ are $\mathscr{J}$-equivalent if and only if they generate the same principal ideal. This is easily shown to be equivalent to $P = \sigma(Q)$ for some $\sigma \in \bS_m$, i.e., their component cardinalities coincide as multisets. Furthermore, $E(J_P)$ corresponds to the orbit of $P$ under the action of $\bS_r$, hence $|E(J_{P})| = |\bS_r : \Stb(I)|$. By \cite{MR3525092} (p. 28, Corollary 3.6), we have $G_P = \{ \sigma P \mid \sigma(P) = P \} \cong \Stb(I)$.
\end{proof}

The classification of $\field[\Int'_m]$ follows from Clifford-Munn-Ponizvski\u{\i}  theory \cite{MR3525092} which states that $\field M \cong \prod_{i=1}^s M_{n_i}(\field [G_{e_i}])$, where $e_1, \dots, e_s$ are idempotent representatives of the $\mathscr{J}$-classes of $M$ and $n_i = |E(J_{e_i})|$.

%
%
\section{Appendix: examples of primitive elements}

We will compute a basis for the space of primitives of $\field[\comperm_3]$ to show the interesting interplay of trees and compositions in $\field[\comperm]$. Clearly
\begin{displaymath} 
	\dim \field[\comperm_m] = \sum_{k = 0}^m k! S_{m,k}C_{k-1}
\end{displaymath}
where $S_{m,k}$ denotes the Stirling numbers of the second kind and $C_{k-1}$ is the Catalan number that counts the number of nonassociative monomials $w(x)$ of degree $k$. Thus, for $m = 0, 1, 2, 3$ we get  $\dim \field[\comperm_m] = 1, 1, 3, 19$. The number of connected compermutations in $\comperm_2$ (resp. $\comperm_3$) is $3-1 = 2$ (resp. $19 - 2\cdot 2 - 1 = 14$). Compermutations in $\comperm_3$ are given by the following table:
\begin{displaymath}
	\begin{array}{c|cccccc}
		\gbeg{1}{3}
		\linea  				\gnl
		\linea  				\gnl 
		\gend
		& (123) & & & & & \\ \hline  \\ 
		\gbeg{2}{3}
		\linea \linea 				\gnl
		\gmu            			\gnl
		\gend
		& (12,3) & (13,2) & (23,1) & (3,12) & (2,13) & (1,23)  \\ \hline \\ 
		\gbeg{3}{4}
		\gmu  					\linea                  		\gnl
		\gcn{2}{1}{2}{3}  		\linea 							\gnl
		\gvac{1} 				\gmu                  			\gnl
		\gend 
		& (1,2,3) & (1,3,2) & (2,1,3)  & (2,3,1) & (3,1,2)  & (3,2,1) \\ \hline \\ 
		\gbeg{3}{4}
		\linea					\gmu  					    	\gnl
		\linea \gcn{2}{1}{2}{1}  						\gnl
		\gmu                  			\gnl
		\gend
		& (1,2,3) & (1,3,2) & (2,1,3)  & (2,3,1) & (3,1,2)  & (3,2,1) 
	\end{array} 
\end{displaymath}
where, as usual in examples, for short, we write $(12,3)$ instead of $(\{1,2\},\{3\})$, etc. 
The non-connected compermutation in $\comperm_2$ is 
$
\gbeg{2}{2}
\gmu            			\gnl
\gend 
(1,2)
$, while the non-connected compermutations in $\comperm_3$ are
\begin{equation}\label{eq:non_connected}
	\gbeg{2}{3}
	\linea \linea 				\gnl
	\gmu            			\gnl
	\gend (12,3), \quad
	\gbeg{2}{3}
	\linea \linea 				\gnl
	\gmu            			\gnl
	\gend (1,23), \quad
	\gbeg{3}{4}
	\gmu  					\linea                  		\gnl
	\gcn{2}{1}{2}{3}  		\linea 							\gnl
	\gvac{1} 				\gmu                  			\gnl
	\gend (1,2,3), \quad
	\gbeg{3}{4}
	\gmu  					\linea                  		\gnl
	\gcn{2}{1}{2}{3}  		\linea 							\gnl
	\gvac{1} 				\gmu                  			\gnl
	\gend (2,1,3), \quad
	\gbeg{3}{4}
	\linea					\gmu  					    	\gnl
	\linea \gcn{2}{1}{2}{1}  						\gnl
	\gmu                  			\gnl
	\gend (1,3,2).
\end{equation}
For instance the last compermutation in \eqref{eq:non_connected} is of the form
$\gbeg{1}{3}
\linea  				\gnl
\linea  				\gnl 
\gend (1) \utimes \gbeg{2}{3}
\linea \linea 				\gnl
\gmu            			\gnl
\gend 
(2,1)
$ with both factors connected, so it is not connected. 

To compute a basis for the space of primitive elements of $\field[\comperm]$, we first consider the connected $t\pi$ with respect to the product $*'$, which form a basis of $\field[\comperm]$ and then we take the dual of this basis. In particular, the dimension of the space of primitive elements in $\field[\comperm_m]$ coincides with the number of connected compermutations in $\comperm_m$. After some computations we get the following basis for the space of primitive elements of $\field[\comperm_3]$:
\begin{enumerate}
	\item $
	\gbeg{1}{3}
	\linea  				\gnl
	\linea  				\gnl 
	\gend (123)
	- 
	\gbeg{2}{3}
	\linea \linea 				\gnl
	\gmu            			\gnl
	\gend (12,3)
	- 
	\gbeg{2}{3}
	\linea \linea 				\gnl
	\gmu            			\gnl
	\gend (1,23)
	+
	\gbeg{3}{4}
	\gmu  					\linea                  		\gnl
	\gcn{2}{1}{2}{3}  		\linea 							\gnl
	\gvac{1} 				\gmu                  			\gnl
	\gend (1,2,3)$
	\item $
	\gbeg{2}{3}
	\linea \linea 				\gnl
	\gmu            			\gnl
	\gend (13,2)
	-
	\gbeg{3}{4}
	\linea					\gmu  					    	\gnl
	\linea \gcn{2}{1}{2}{1}  						\gnl
	\gmu                  			\gnl
	\gend (1,3,2)$
	\item $
	\gbeg{2}{3}
	\linea \linea 				\gnl
	\gmu            			\gnl
	\gend  (23,1)
	-
	\gbeg{2}{3}
	\linea \linea 				\gnl
	\gmu            			\gnl
	\gend (1,23)
	+
	\gbeg{3}{4}
	\gmu  					\linea                  		\gnl
	\gcn{2}{1}{2}{3}  		\linea 							\gnl
	\gvac{1} 				\gmu                  			\gnl
	\gend (1,2,3)
	-
	\gbeg{3}{4}
	\gmu  					\linea                  		\gnl
	\gcn{2}{1}{2}{3}  		\linea 							\gnl
	\gvac{1} 				\gmu                  			\gnl
	\gend (2,1,3)$
	\item $
	-
	\gbeg{2}{3}
	\linea \linea 				\gnl
	\gmu            			\gnl
	\gend  (12,3)
	+
	\gbeg{2}{3}
	\linea \linea 				\gnl
	\gmu            			\gnl
	\gend (3,12)
	+
	\gbeg{3}{4}
	\gmu  					\linea                  		\gnl
	\gcn{2}{1}{2}{3}  		\linea 							\gnl
	\gvac{1} 				\gmu                  			\gnl
	\gend (1,2,3)
	-
	\gbeg{3}{4}
	\linea					\gmu  					    	\gnl
	\linea \gcn{2}{1}{2}{1}  						\gnl
	\gmu                  			\gnl
	\gend (1,3,2)$
	\item $
	\gbeg{2}{3}
	\linea \linea 				\gnl
	\gmu            			\gnl
	\gend (2,13)
	-
	\gbeg{3}{4}
	\gmu  					\linea                  		\gnl
	\gcn{2}{1}{2}{3}  		\linea 							\gnl
	\gvac{1} 				\gmu                  			\gnl
	\gend (2,1,3)$
	\item $
	\gbeg{3}{4}
	\gmu  					\linea                  		\gnl
	\gcn{2}{1}{2}{3}  		\linea 							\gnl
	\gvac{1} 				\gmu                  			\gnl
	\gend (1,3,2)
	-
	\gbeg{3}{4}
	\linea					\gmu  					    	\gnl
	\linea \gcn{2}{1}{2}{1}  						\gnl
	\gmu                  			\gnl
	\gend (1,3,2)$
	\item $
	-
	\gbeg{3}{4}
	\gmu  					\linea                  		\gnl
	\gcn{2}{1}{2}{3}  		\linea 							\gnl
	\gvac{1} 				\gmu                  			\gnl
	\gend (2,1,3)
	+ 
	\gbeg{3}{4}
	\gmu  					\linea                  		\gnl
	\gcn{2}{1}{2}{3}  		\linea 							\gnl
	\gvac{1} 				\gmu                  			\gnl
	\gend (2,3,1)$
	\item $
	\gbeg{3}{4}
	\gmu  					\linea                  		\gnl
	\gcn{2}{1}{2}{3}  		\linea 							\gnl
	\gvac{1} 				\gmu                  			\gnl
	\gend (3,1,2)
	-
	\gbeg{3}{4}
	\linea					\gmu  					    	\gnl
	\linea \gcn{2}{1}{2}{1}  						\gnl
	\gmu                  			\gnl
	\gend (1,3,2)$
	\item $
	\gbeg{3}{4}
	\gmu  					\linea                  		\gnl
	\gcn{2}{1}{2}{3}  		\linea 							\gnl
	\gvac{1} 				\gmu                  			\gnl
	\gend (1,2,3)
	-
	\gbeg{3}{4}
	\gmu  					\linea                  		\gnl
	\gcn{2}{1}{2}{3}  		\linea 							\gnl
	\gvac{1} 				\gmu                  			\gnl
	\gend (2,1,3)
	+
	\gbeg{3}{4}
	\gmu  					\linea                  		\gnl
	\gcn{2}{1}{2}{3}  		\linea 							\gnl
	\gvac{1} 				\gmu                  			\gnl
	\gend (3,2,1)
	-
	\gbeg{3}{4}
	\linea					\gmu  					    	\gnl
	\linea \gcn{2}{1}{2}{1}  						\gnl
	\gmu                  			\gnl
	\gend (1,3,2)$
	\item $
	-
	\gbeg{3}{4}
	\gmu  					\linea                  		\gnl
	\gcn{2}{1}{2}{3}  		\linea 							\gnl
	\gvac{1} 				\gmu                  			\gnl
	\gend (1,2,3)
	+
	\gbeg{3}{4}
	\linea					\gmu  					    	\gnl
	\linea \gcn{2}{1}{2}{1}  						\gnl
	\gmu                  			\gnl
	\gend (1,2,3)$
	\item $
	-
	\gbeg{3}{4}
	\gmu  					\linea                  		\gnl
	\gcn{2}{1}{2}{3}  		\linea 							\gnl
	\gvac{1} 				\gmu                  			\gnl
	\gend (2,1,3)
	+
	\gbeg{3}{4}
	\linea					\gmu  					    	\gnl
	\linea \gcn{2}{1}{2}{1}  						\gnl
	\gmu                  			\gnl
	\gend (2,1,3)$
	\item $
	-
	\gbeg{3}{4}
	\gmu  					\linea                  		\gnl
	\gcn{2}{1}{2}{3}  		\linea 							\gnl
	\gvac{1} 				\gmu                  			\gnl
	\gend (2,1,3)
	+
	\gbeg{3}{4}
	\linea					\gmu  					    	\gnl
	\linea \gcn{2}{1}{2}{1}  						\gnl
	\gmu                  			\gnl
	\gend (2,3,1)$
	\item $
	-
	\gbeg{3}{4}
	\linea					\gmu  					    	\gnl
	\linea \gcn{2}{1}{2}{1}  						\gnl
	\gmu                  			\gnl
	\gend (1,3,2)
	+
	\gbeg{3}{4}
	\linea					\gmu  					    	\gnl
	\linea \gcn{2}{1}{2}{1}  						\gnl
	\gmu                  			\gnl
	\gend (3,1,2)$	
	\item $
	\gbeg{3}{4}
	\gmu  					\linea                  		\gnl
	\gcn{2}{1}{2}{3}  		\linea 							\gnl
	\gvac{1} 				\gmu                  			\gnl
	\gend (1,2,3)
	-
	\gbeg{3}{4}
	\gmu  					\linea                  		\gnl
	\gcn{2}{1}{2}{3}  		\linea 							\gnl
	\gvac{1} 				\gmu                  			\gnl
	\gend (2,1,3)
	-
	\gbeg{3}{4}
	\linea					\gmu  					    	\gnl
	\linea \gcn{2}{1}{2}{1}  						\gnl
	\gmu                  			\gnl
	\gend (1,3,2)
	+
	\gbeg{3}{4}
	\linea					\gmu  					    	\gnl
	\linea \gcn{2}{1}{2}{1}  						\gnl
	\gmu                  			\gnl
	\gend (3,2,1)$
\end{enumerate}

%
%

\begin{bibdiv}
	\begin{biblist}
		
		\bib{MR2196760}{article}{
			author={Aguiar, M.},
			author={Bergeron, N.},
			author={Sottile, F.},
			title={Combinatorial {H}opf algebras and generalized
				{D}ehn-{S}ommerville relations},
			date={2006},
			journal={Compos. Math.},
			volume={142},
			number={1},
			pages={1\ndash 30},
		}
		
		\bib{MR2103213}{article}{
			author={Aguiar, M.},
			author={Sottile, F.},
			title={Structure of the {M}alvenuto-{R}eutenauer {H}opf algebra of
				permutations},
			date={2005},
			journal={Adv. Math.},
			volume={191},
			number={2},
			pages={225\ndash 275},
		}
		
		\bib{MR2555523}{article}{
			author={Bergeron, N.},
			author={Zabrocki, M.},
			title={The {H}opf algebras of symmetric functions and quasi-symmetric
				functions in non-commutative variables are free and co-free},
			date={2009},
			journal={J. Algebra Appl.},
			volume={8},
			number={4},
			pages={581\ndash 600},
		}
		
		\bib{MR2424338}{book}{
			author={Blessenohl, D.},
			author={Schocker, M.},
			title={Noncommutative character theory of the symmetric group},
			publisher={Imperial College Press, London},
			date={2005},
			pages={x+172},
			isbn={1-86094-511-2},
		}		
	
		\bib{Bid97}{thesis}{
			author={Bidigare, T.~P.},
			title={Hyperplane arrangement face algebras and their associated markov
				chains},
			type={Ph.D. Thesis},
			date={1997},
		}
		
		\bib{Br00}{article}{
			author={Brown, K.~S.},
			title={Semigroups, rings, and markov chains},
			date={2000},
			journal={J. Theoret. Probab.},
			volume={13},
			number={4},
			pages={871\ndash 938},
		}
		
		\bib{MR1020585}{article}{
			author={Garsia, A.~M.},
			author={Reutenauer, C.},
			title={A decomposition of {S}olomon's descent algebra},
			date={1989},
			journal={Adv. Math.},
			volume={77},
			number={2},
			pages={189\ndash 262},
		}
		
		\bib{MR0506405}{article}{
			author={Geissinger, L.},
			title={Hopf algebras of symmetric functions and class functions},
			date={1977},
			pages={168\ndash 181. Lecture Notes in Math., Vol. 579},
		}
		
		\bib{MR1327096}{article}{
			author={Gelfand, I.~M.},
			author={Krob, D.},
			author={Lascoux, A.},
			author={Leclerc, B.},
			author={Retakh, V.~S.},
			author={Thibon, J.-Y.},
			title={Noncommutative symmetric functions},
			date={1995},
			journal={Adv. Math.},
			volume={112},
			number={2},
			pages={218\ndash 348},
		}
		
		\bib{MR2674654}{article}{
			author = {Livernet, M.},
			title = {From left modules to algebras over an operad: application to
				combinatorial {H}opf algebras},
			journal = {Ann. Math. Blaise Pascal},
			volume = {17},
			year = {2010},
			number = {1},
			pages = {47--96},
		}
		
		\bib{MR2423811}{article}{
			author={Livernet, M.},
			author={Patras, F.},
			title={Lie theory for Hopf operads},
			journal={J. Algebra},
			volume={319},
			date={2008},
			number={12},
			pages={4899--4920},
		}
		
		\bib{MR2504663}{article}{
			author={Loday, J.~L.},
			title={Generalized bialgebras and triples of operads},
			journal={Ast\'erisque},
			number={320},
			date={2008},
			pages={x+116},
			issn={0303-1179},
			isbn={978-2-85629-257-0},
		}
	
		\bib{MR171826}{article}{
		author={Mac Lane, S.},
		title={Categorical algebra},
		journal={Bull. Amer. Math. Soc.},
		volume={71},
		date={1965},
		pages={40--106},
		issn={0002-9904},
	}

		\bib{MR1358493}{article}{
			author={Malvenuto, C.},
			author={Reutenauer, C.},
			title={Duality between quasi-symmetric functions and the {S}olomon
				descent algebra},
			date={1995},
			journal={J. Algebra},
			volume={177},
			number={3},
			pages={967\ndash 982},
		}
		
		\bib{MR3174282}{article}{
			author={Mostovoy, J.},
			author={P\'erez-Izquierdo, J.~M.},
			author={Shestakov, I.~P.},
			title={Hopf algebras in non-associative {L}ie theory},
			date={2014},
			journal={Bull. Math. Sci.},
			volume={4},
			number={1},
			pages={129\ndash 173},
		}
		
		\bib{MR1302855}{article}{
			author={Patras, F.},
			title={L'alg\`ebre des descentes d'une big\`ebre gradu\'{e}e},
			date={1994},
			journal={J. Algebra},
			volume={170},
			number={2},
			pages={547\ndash 566},
		}

\bib{MR2074989}{article}{
	author={Patras, Fr\'{e}d\'{e}ric},
	author={Reutenauer, Christophe},
	title={On descent algebras and twisted bialgebras},
	language={English, with English and Russian summaries},
	journal={Mosc. Math. J.},
	volume={4},
	date={2004},
	number={1},
	pages={199--216, 311},
}

		\bib{1812.04450}{arXiv}{
			title={Nonassociative Solomon's descent algebras}, 
			author={Pérez-Izquierdo, J.~M. },
			year={2018},
			eprint={1812.04450},
			archivePrefix={arXiv},
			primaryClass={math.RA},
			url={https://arxiv.org/abs/1812.04450}, 
		}
		
		\bib{MR1334836}{article}{
			author={Poirier, S.},
			author={Reutenauer, C.},
			title={Alg\`ebres de {H}opf de tableaux},
			date={1995},
			journal={Ann. Sci. Math. Qu\'{e}bec},
			volume={19},
			number={1},
			pages={79\ndash 90},
		}
		
		\bib{MR924255}{article}{
			author={Sabinin, L.~V.},
			author={Mikheev, P.~O.},
			title={Infinitesimal theory of local analytic loops},
			date={1987},
			journal={Dokl. Akad. Nauk SSSR},
			volume={297},
			number={4},
		}
		
		\bib{MR0444756}{article}{
			author={Solomon, L.},
			title={A {M}ackey formula in the group ring of a {C}oxeter group},
			date={1976},
			journal={J. Algebra},
			volume={41},
			number={2},
			pages={255\ndash 264},
		}
		
		\bib{MR3525092}{book}{
			author={Steinberg, B.},
			title={Representation theory of finite monoids},
			series={Universitext},
			publisher={Springer, Cham},
			date={2016},
			pages={xxiv+317},
			isbn={978-3-319-43930-3},
			isbn={978-3-319-43932-7},
		}
		
	\end{biblist}
\end{bibdiv}

%
\end{document}